\newtheorem{theorem}{\textsf{Theorem}}
\newtheorem{lemma}{\textsf{Lemma}}
\newenvironment{proof}[1][\textsf{Proof. }]{\textbf{#1}}{$\square$}
\newenvironment{example}[1][\textsf{Example. }]{\textbf{#1}}{$\lozenge$}
\newtheorem{conj}{\textsf{Conjecture}}
\begin{document}

\title{
\date{}
{
\large \textsf{\textbf{Combinatorial decompositions, Kirillov-Reshetikhin invariants and the Volume Conjecture for hyperbolic polyhedra}}
}
}
\author{\small Alexander Kolpakov \hspace*{42pt}
\small Jun Murakami}
\maketitle

\begin{abstract}\noindent

We suggest a method of computing volume for a simple polytope $P$ in three-dimensional hyperbolic space $\mathbb{H}^3$. This method combines the combinatorial reduction of $P$ as a trivalent graph $\Gamma$ (the $1$-skeleton of $P$) by $I-H$, or Whitehead, moves (together with shrinking of triangular faces) aligned with its geometric splitting into generalised tetrahedra. With each decomposition (under some conditions) we associate a potential function $\Phi$ such that the volume of $P$ can be expressed through a critical values of $\Phi$. The results of our numeric experiments with this method suggest that one may associated the above mentioned sequence of combinatorial moves with the sequence of moves required for computing the Kirillov-Reshetikhin invariants of the trivalent graph $\Gamma$. Then the corresponding geometric decomposition of $P$ might be used in order to establish a link between the volume of $P$ and the asymptotic behaviour of the Kirillov-Reshetikhin invariants of $\Gamma$, which is colloquially know as the Volume Conjecture. 

\medskip
{\textsf{\textbf{Key words}}: hyperbolic polyhedron, I-H move, Whitehead move, volume.}
\end{abstract}

\parindent=0pt

\section{Introduction}\label{section:introduction}

In this paper we suggest a method of computing volume for a polytope $P$ in three-dimensional hyperbolic space $\mathbb{H}^3$. We require the polytope $P$ to be simple, so that the computation of volumes may combine two processes of initially different nature. First, a reduction of $P$ (as a trivalent graph) to the tetrahedron by a sequence of $I-H$ (or Whitehead) and so-called ``capping'' moves. And second, a decomposition of $P$ into a number of generalised tetrahedra $T_i$, $i=1,\dots,n$, such that $T_i$ partition $P$ into geometric parts, and $\mathrm{Vol}\,P = \sum^n_{i=1} \mathrm{Vol}\,T_i$. 

With such a decomposition we associate a function $\Phi(\ell_1,\dots,\ell_m)$, $m\geq 1$, that depends on some additional geometric parameters of the decomposition (in fact. lengths of some common perpendiculars to the faces of $P$). Under some conditions, we show that $\mathrm{Vol}\,P$ can be expressed through the value of $\Phi(\ell^\ast_1,\dots,\ell^\ast_m)$ at a critical point $(\ell^\ast_1,\dots,\ell^\ast_m)$. 

Finally, we use our method in a large number of examples, by implementing it in Wolfram Mathematica\textsuperscript{\textregistered} \cite{W}. We also check our computations, where possible, with the Orb software \cite{Orb}. 

This part of our work falls in line with the recent study on the Volume Conjecture for generalised tetrahedra by Costantino and Murakami \cite{CM} and for general hyperbolic polytopes by Costantino, Gu\'{e}ritaud and van der Veen \cite{CGV}. 

Next,given a polytope $P$, we consider its $1$-skeleton $\Gamma$ as a trivalent graph, compute the Kirillov-Reshetikhin invariants of $\Gamma$ with an appropriate colouring of its edges determined by the corresponding dihedral angles of $P$ and study their asymptotic behaviour. We conjecture that the sequence of combinatorial moves that is used in order to compute the Kirillov-Reshetikhin invariants of the corresponding trivalent graph $\Gamma$ is associated with the sequence of moves used in the combinatorial reduction of $P$. Then the geometric decomposition of $P$ associated with the respective sequence of $I-H$ and capping moves might be used in order to establish a link between the volume of $P$ and the asymptotic behaviour of the Kirillov-Reshetikhin invariants of $\Gamma$. Such a link has been colloquially named the Volume Conjecture by various authors, and is first established in \cite{Kashaev}.

The paper is structured as follows: after recalling some preliminary results on the volume formulas for generalised hyperbolic tetrahedra (cf. \cite{KM2012, KM-err} and \cite{KM2014}), we proceed to the description of our method and formulate the main statement of the paper. We illustrate our method in two main working examples: computing the volume of a hyperbolic prism $\Pi$, and that of a hyperbolic ``pleated'' prism $\mathscr{P}$. Then, we provide a more computationally complicated example of a dodecahedron with various Coxeter dihedral angles. However, the method of present paper is not restricted to Coxeter polytopes. We expect it to work in a wide variety of dihedral angles, and expect that it can be generalised to allow computing volumes of knotted trivalent graphs. 

Finally, we state a number of conjectures relating the volume of a hyperbolic polyhedron $P$ to the Kirillov-Reshetikhin invariants of its $1$-skeleton $\Gamma$, viewed as a trivalent graph in the topological $3$-sphere. Various numeric experiments are described, that corroborate our conjectures.

\medskip

\textbf{Acknowledgements.} A.K. was supported by the Swiss National Science Foundation (SNSF project no.~P300P2-151316) and the Japan Society for the Promotion of Science (Invitation Programs for Research project no.~S-14021). A.K. is thankful to Waseda University for hospitality during his visit. J.M. was supported by Waseda University (Grant for Special Research Projects no.~2014A-345) and the Japan Society for the Promotion of Science (Grant-in-Aid projects no.~25287014, no.~25610022).

\section{Preliminaries}\label{section:preliminaries}

Below, we describe a method to compute the volume of a simple polyhedron $P \subset \mathbb{H}^3$ that admits a decomposition into generalised hyperbolic tetrahedra. We recall, that a convex polytope $P$ is \textit{simple} if the valence of each vertex equals the dimension of $P$. Here, it means that each vertex of $P$ is trivalent. Now we define a generalised tetrahedron, and the decomposition of a given polyhedron $P$ into such generalised tetrahedra following a simple combinatorial procedure. 

Prior to doing so, we describe two combinatorial operations on the one-skeleton of $P$, associated with a decomposition into generalised tetrahedra. The first is the classical $I-H$ move, and the second is capping a triangular face with a tetrahedron:

\begin{figure}[h]
\begin{center}
\includegraphics* [scale=0.4]{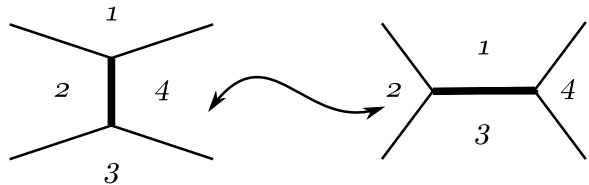}
\end{center}
\caption{An $I-H$ move}\label{fig:ih-move}
\end{figure}

\begin{itemize}
\item[1.] $I-H$ move: depicted in Fig.~\ref{fig:ih-move}. It consist of ``switching'' the edge between face $1$ and face $3$ to the edge between face $2$ and face $4$, and can be applied unless it collapses any of these faces.
\item[2.] Capping: depicted in Fig.~\ref{fig:capping-move}. It results in placing a tetrahedron over face $1$, such that the faces $2$, $3$ and $4$ continue until they reach the new vertex. 
\end{itemize}

\begin{figure}[h]
\begin{center}
\includegraphics* [scale=0.4]{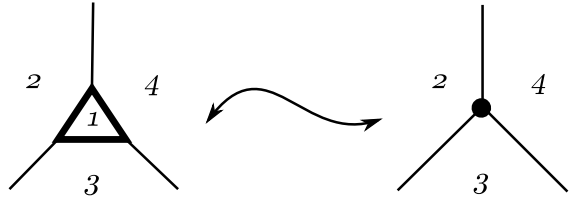}
\end{center}
\caption{A ``capping'' move}\label{fig:capping-move}
\end{figure}

These moves have been introduced in \cite{KR}, and later used in \cite{CGV, CM} for the purposes of defining a statistical sum over a spin network associated with a hyperbolic polyhedron. 

It's not hard to see that the following statement holds.

\begin{figure}[h]
\begin{center}
\includegraphics* [scale=0.4]{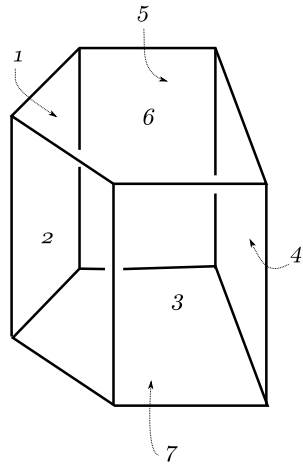}
\end{center}
\caption{A pentagonal prism $\Pi$}\label{fig:prism}
\end{figure}

\begin{lemma}\label{lemma:moves}
Suppose that $P$ is a simple polyhedron, which is not a tetrahedron. Then there exists a sequence of $I-H$ moves and capping moves transforming $P$ into a tetrahedron. 
\end{lemma}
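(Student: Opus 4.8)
The plan is to induct on the number of faces (equivalently, by Euler's formula for simple polyhedra, on the number of vertices or edges) of $P$. The base case is the tetrahedron itself, which is excluded, so the first genuine case is a polyhedron with five faces, where one checks directly that a single $I-H$ move or capping move produces a tetrahedron. For the inductive step I would try to find a single move — either an $I-H$ move or a capping move — that strictly decreases the face count (capping increases vertices but I would instead read the moves in the direction that simplifies: capping removes a triangular face by collapsing the small tetrahedron, i.e. ``un-capping'' a triangular face onto which exactly one tetrahedron sits, and an $I-H$ move followed by collapse of a resulting bigon shrinks an edge). Concretely, I would argue that any simple polyhedron other than the tetrahedron contains either a triangular face that can be removed by the inverse capping move, or a pair of adjacent faces sharing an edge to which an $I-H$ move can be applied so that one of the four involved faces becomes a triangle and is then shrunk, in either case reducing to a strictly smaller simple polyhedron to which the induction hypothesis applies.

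The combinatorial bookkeeping I would carry out is the following. Let $P$ be simple with $V$ vertices, $E$ edges, $F$ faces, so $3V = 2E$ and $V - E + F = 2$, giving $E = 3(F-2)$ and $V = 2(F-2)$. If $P$ has a triangular face $t$, I would look at whether an inverse capping move is admissible at $t$: this requires the three neighbours of $t$ to be pairwise distinct faces and to remain faces (not collapse) after the move; if so, performing it yields a simple polyhedron with one fewer face. If no triangular face admits an inverse capping move, or if $P$ has no triangular faces at all, I would instead pick an edge $e$ shared by faces $f_1, f_3$ and pick the $I-H$ move at $e$; Lemma~\ref{lemma:moves}'s companion figure shows the move replaces $e$ by a transverse edge. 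One then shrinks a triangular face created by this move. The key point is that in a simple polyhedron that is not a tetrahedron, one can always make at least one of these reductions available by choosing $e$ (or $t$) appropriately — for instance by taking a face with the smallest number of sides and working at one of its edges.

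The main obstacle will be handling the degeneracy conditions: an $I-H$ move ``can be applied unless it collapses any of these faces,'' and likewise an inverse capping move is blocked if two of the three neighbouring faces of the triangle coincide or if the move would collapse a face. So the heart of the argument is a case analysis showing that these obstructions cannot occur simultaneously for every choice of edge and every triangular face in a simple polyhedron that is not a tetrahedron — i.e. some local configuration is always reducible. I expect this to follow from a counting argument together with the observation that a simple polyhedron in which every edge is $I-H$-blocked and every triangular face is uncap-blocked would have to have a very restricted face-adjacency structure, ultimately forcing it to be the tetrahedron. I would single out, say, a face $f$ with the minimum number $k$ of edges; if $k=3$ I would analyze the triangle and its neighbours, and if $k \geq 4$ I would show some edge of $f$ admits an $I-H$ move because the faces around it are forced to be distinct by the minimality of $k$ and simplicity. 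Once an admissible move exists, applying it and invoking the inductive hypothesis finishes the proof; I would also remark that the sequence of moves is of course far from unique, which is exactly the freedom later exploited to align the combinatorial reduction with a geometric decomposition.
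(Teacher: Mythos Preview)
The paper does not actually prove this lemma: it is introduced with the sentence ``It's not hard to see that the following statement holds'' and then illustrated by worked examples (the pentagonal prism and pleated prism). So there is no proof in the paper to compare yours against; the authors are treating the reduction as a folklore combinatorial fact.

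Your inductive strategy on the number of faces is the standard way to justify such a statement, and the overall shape --- use $I$--$H$ moves to produce a triangular face when none is present, then cap that triangle to drop the face count by one --- is correct and is exactly what the paper's examples exhibit. Two small points of confusion in your write-up are worth cleaning up. First, in the paper's convention capping already \emph{decreases} the complexity (it replaces a triangular face and its three vertices by a single new vertex, so $F\mapsto F-1$, $V\mapsto V-2$, $E\mapsto E-3$); there is no need to speak of an ``inverse capping'' or to reverse the direction of the move. Second, an $I$--$H$ move on a simple polyhedron never produces a bigon, so the phrase ``$I$--$H$ move followed by collapse of a resulting bigon'' does not describe what actually happens; what the move can do is turn one of the four incident faces into a triangle, which is then capped.

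The one place where your sketch is genuinely incomplete is the admissibility case analysis, which you yourself flag. Concretely, you need: (i) if the minimal face degree is $k\geq 4$, then around some edge of a minimal face the four incident faces are pairwise distinct, so the $I$--$H$ move is legal and one of the faces becomes a triangle; and (ii) if a triangle is present, its three neighbouring faces are pairwise distinct (this follows from $3$-connectivity of the $1$-skeleton, i.e.\ Steinitz), so capping is legal unless the result would fail to be a polyhedron --- which happens only when $F=4$, i.e.\ $P$ is already a tetrahedron. Filling in (i) via minimality of $k$ and $3$-connectivity is routine; once you do, the induction closes. Since the paper offers no argument at all, your proposal, with these corrections, is strictly more than what the paper provides.
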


\begin{example}
Let us take a pentagonal prism $\Pi$ depicted in Fig.~\ref{fig:prism} and apply a sequence of combinatorial transformations to its one-skeleton, as shown in Fig.~\ref{fig:prism-moves}. 

\begin{figure}[h]
\begin{center}
\includegraphics* [scale=0.33]{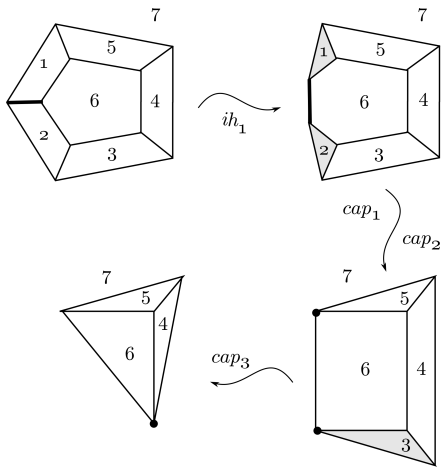}
\end{center}
\caption{A sequence of $I-H$ and capping moves applied to the prism $\Pi$}\label{fig:prism-moves}
\end{figure}

Thus, having applied a short sequence of combinatorial moves we obtain a tetrahedron. 
\end{example}

\begin{figure}[h]
\begin{center}
\includegraphics* [scale=0.35]{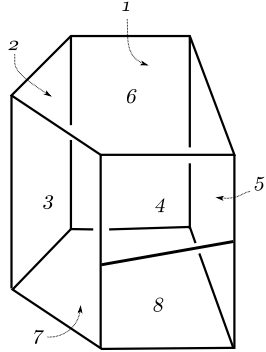}
\end{center}
\caption{A pentagonal pleated prism $\mathscr{P}$}\label{fig:pleated-prism}
\end{figure}

\begin{example}
Let us take a pentagonal pleated prism $\mathscr{P}$ depicted in Fig.~\ref{fig:pleated-prism} and apply a sequence of combinatorial transformation to its one-skeleton, as shown in Fig.~\ref{fig:pleated-prism-moves}. 
\end{example}

\begin{figure}[h]
\begin{center}
\includegraphics* [scale=0.33]{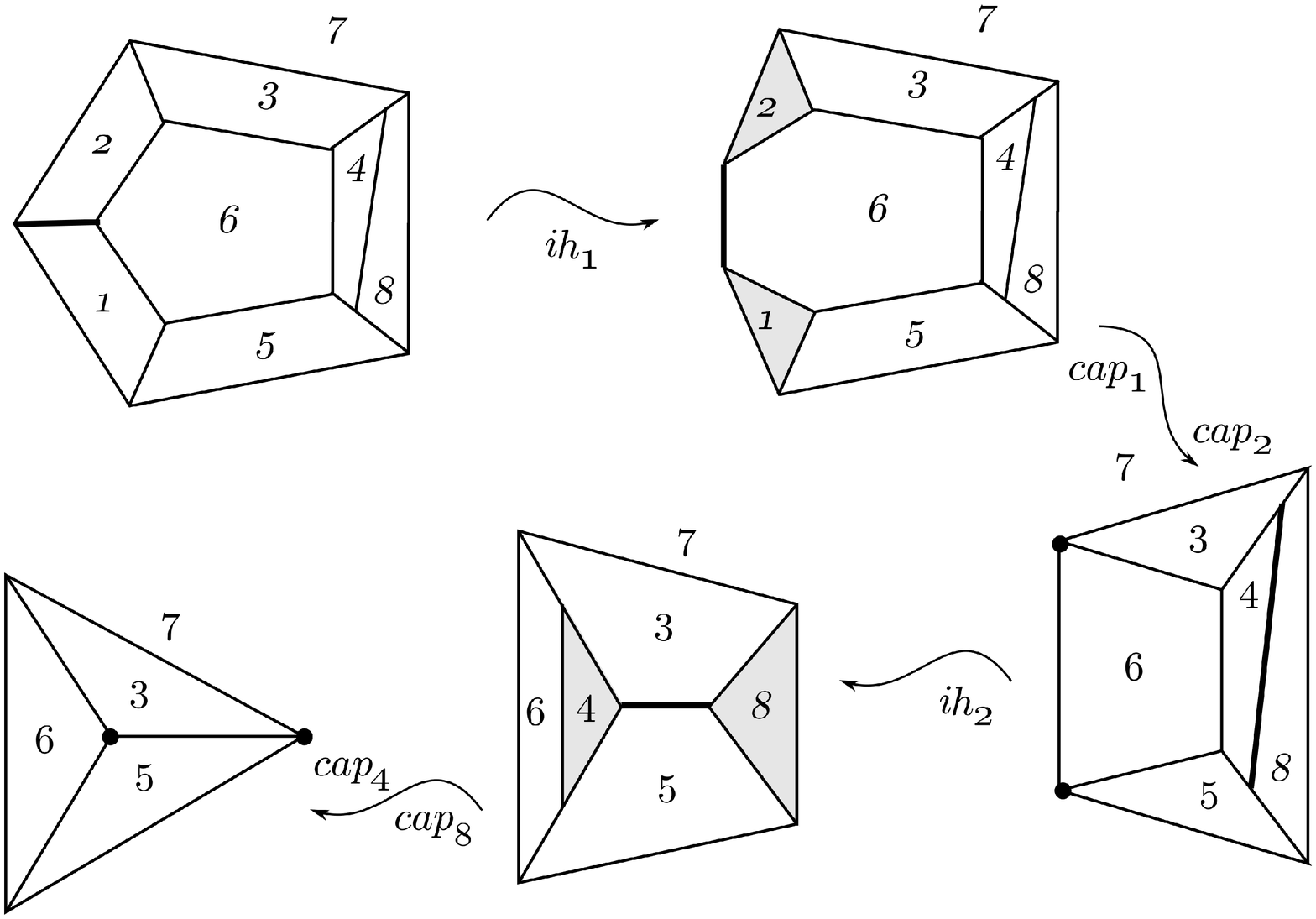}
\end{center}
\caption{A sequence of $I-H$ and capping moves applied to the pleated prism $\mathscr{P}$}\label{fig:pleated-prism-moves}
\end{figure}

Let us define a generalised hyperbolic tetrahedron $T$. Let $H_{n_k}$, $k=\overline{1,4}$, be four planes with the respective outer normal vectors $n_k$, $k=\overline{1,4}$. We compose the generalised Gram matrix of $T$ as the Gram matrix of the system of vectors $n_k$. From the matrix $G$ we produce the dual (generalised) Gram matrix of $T$, that corresponds to its ``vertices'' $v_k$, although some of them are outer normals to the respective polar planes $P_{v_k}$. If $v_k$ is ultra-ideal, we associate with it a plane $H_{v_k} = P_{v_k}$, otherwise we put $H_{v_k} = \mathbb{H}^3$. Given a plane $H_n$ with the outer normal $n$, let $H^{+}_n$ denote the half-space which $n$ points to and let $H^{-}_n$ denote the half-space which $-n$ points to. Then, the generalised hyperbolic tetrahedron $T$ is defined as
\begin{equation*}
T = \left( \bigcap^{4}_{k=1} H^{-}_{n_k} \right)\,\, \cap \,\, \left( \bigcap^{4}_{k=1} H^{-}_{v_k} \right).
\end{equation*}
Thus, each generalised tetrahedron $T$ still can be characterised by six parameters $a_1$, $a_2$, $\dots$, $a_6$, each of which corresponds to an edge of the ``standard'' tetrahedron. However, we have to distinguish two cases: 
\begin{itemize}
\item[1.] if the planes $H_{n_i}$ and $H_{n_j}$ intersect along an edge $e_{ij}$ with parameter $a_k$, then we set $a_k = e^{i \alpha_k}$, where $\alpha_k$ is the dihedral angle along $e_{ij}$;
\item[2.] if the planes $H_{n_i}$ and $H_{n_j}$ are ultra-parallel, then the vertices $v_k$ and $v_l$ are ultra-ideal and the planes $H_{v_k}$ and $H_{v_l}$ intersect along the common perpendicular $p_{ij}$ to $H_{n_i}$ and $H_{n_j}$: thus we set the respective parameter $a_k = e^{-\ell_k}$, where $\ell_k$ is the length of $p_{ij}$.
\end{itemize}
In Fig.~\ref{fig:prism-truncated-tetr} the case of a prism-truncated tetrahedron (see \cite{KM2012, KM-err} and \cite{KM2014}) is illustrated in detail. This type of generalised tetrahedron will appear quite often in the examples below, as well as several other types that we enumerate below. 

\begin{figure}[h]
\begin{center}
\includegraphics* [scale=0.33]{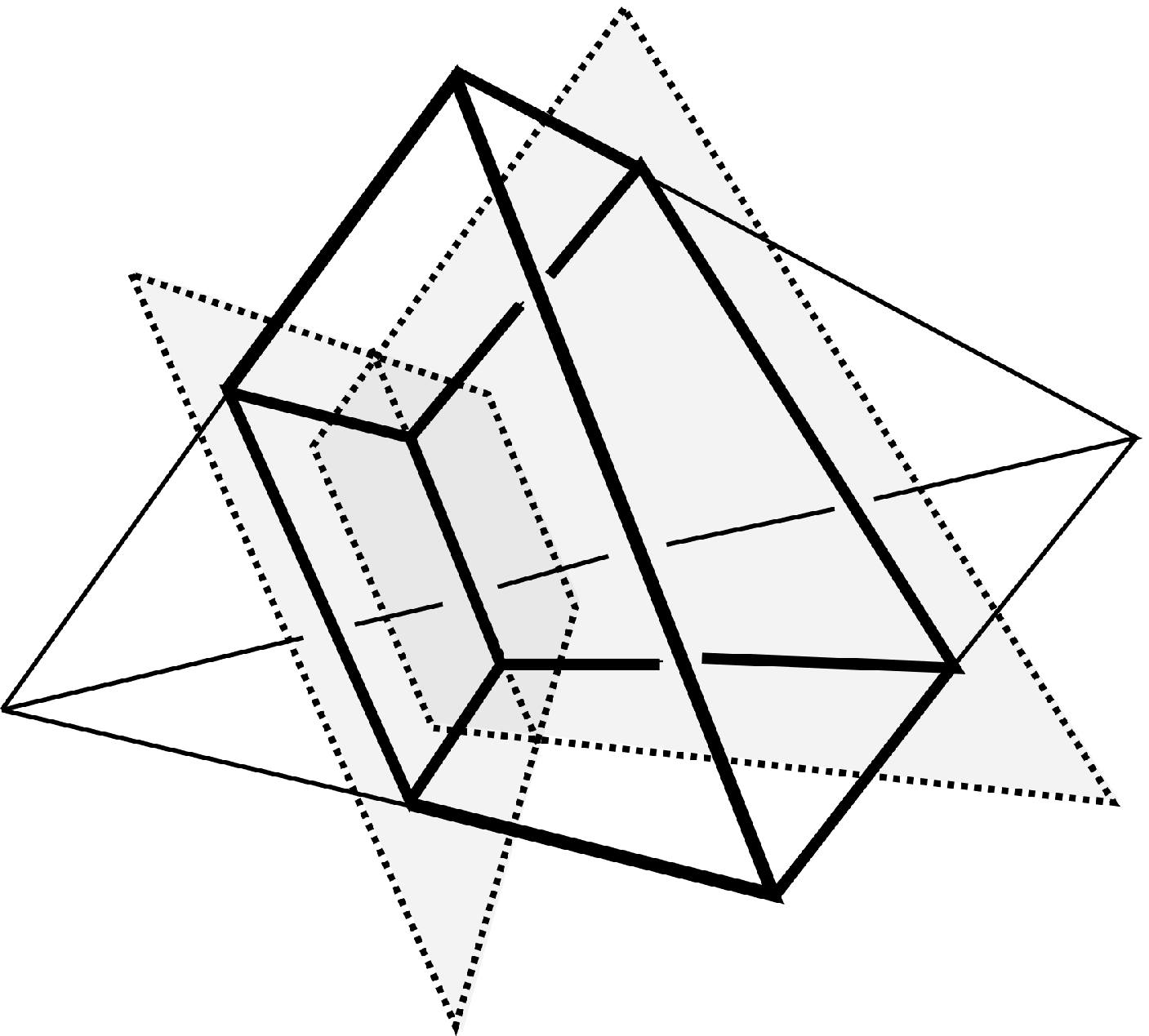}
\end{center}
\caption{A prism-truncated tetrahedron}\label{fig:prism-truncated-tetr}
\end{figure}

With each type of generalised tetrahedron $T$ we associate a symbol $t|\mathrm{a}\dots|\mathrm{p}\dots$ where we indicate which parameters $a_k$ correspond to dihedral angles (their indices follow the letter ``a'' in the symbol), and which parameters $a_k$ correspond to common perpendiculars' lengths (their indices follow the letter ``p''). Thus, given a symbol $t|\mathrm{a}\dots|\mathrm{p}\dots$, we can produce the corresponding generalised tetrahedron $T$ from the standard one by truncating the respective vertices and letting the corresponding polar planes intersect. Then we can recover the geometric parameters of $T$, such as its dihedral angles and/or edge lengths. The Gram matrix $G$ of $T$ in this case is still given by
\begin{equation}\label{eq:matrixGram}
G = \left( \begin{array}{cccc}
1 & -\frac{a_1 + 1/a_1}{2} & -\frac{a_2 + 1/a_2}{2} & -\frac{a_6 + 1/a_6}{2}\\
-\frac{a_1 + 1/a_1}{2} & 1 & -\frac{a_3 + 1/a_3}{2} & -\frac{a_5 + 1/a_5}{2}\\
-\frac{a_2 + 1/a_2}{2} & -\frac{a_3 + 1/a_3}{2} & 1 & -\frac{a_4 + 1/a_4}{2}\\
-\frac{a_6 + 1/a_6}{2} & -\frac{a_5 + 1/a_5}{2} & -\frac{a_4 + 1/a_4}{2} & 1
\end{array} \right),
\end{equation}
as if $T$ were a ``usual'' tetrahedron: it is still sufficient to determine $T$ uniquely, up to an isometry (see \cite{KM2012, KM-err} and \cite{Jacquemet}). 

Given a generalised hyperbolic tetrahedron $T$ with Gram matrix \eqref{eq:matrixGram}, we define the following auxiliary quantities in order to state a formula for $\mathrm{Vol}\, T$. 

Let $\mathscr{U} = \mathscr{U}(a_1,a_2,a_3,a_4,a_5,a_6,z)$ denote the function
\begin{eqnarray}\label{eq:UDefinition}
\lefteqn{\mathscr{U} = \mathrm{Li}_2(z) + \mathrm{Li}_2(a_1a_2a_4a_5z) + \mathrm{Li}_2(a_1a_3a_4a_6z) + \mathrm{Li}_2(a_2a_3a_5a_6z)}\\
\nonumber && - \mathrm{Li}_2(-a_1a_2a_3z) - \mathrm{Li}_2(-a_1a_5a_6z) - \mathrm{Li}_2(-a_2a_4a_6z) - \mathrm{Li}_2(-a_3a_4a_5z)
\end{eqnarray}
depending on seven complex variables $a_k$, $k=\overline{1,6}$ and $z$, where $\mathrm{Li}_2(\circ)$ is the dilogarithm function.

Let $z_{-}$ and $z_{+}$ be two solutions to the equation $e^{z \frac{\partial \mathscr{U}}{\partial z}} = 1$ in the variable $z$. According to \cite{MY}, these are
\begin{equation}\label{eq:Zpm}
z_{-} = \frac{-q_1-\sqrt{q^2_1-4q_0q_2}}{2q_2} \,\,\,\mbox{ and }\,\,\, z_{+} = \frac{-q_1+\sqrt{q^2_1-4q_0q_2}}{2q_2},
\end{equation}
where
\begin{equation*}
q_0 = 1 + a_1a_2a_3 + a_1a_5a_6 + a_2a_4a_6 + a_3a_4a_5
+ a_1a_2a_4a_5 + a_1a_3a_4a_6 + a_2a_3a_5a_6,
\end{equation*}
\begin{eqnarray}\label{eq:Q1Q2Q3Definition}
\nonumber q_1 = -a_1 a_2 a_3 a_4 a_5 a_6 \bigg{(}\bigg{(}a_1-\frac{1}{a_1}\bigg{)}\bigg{(}a_4-\frac{1}{a_4}\bigg{)} + \bigg{(}a_2-\frac{1}{a_2}\bigg{)}\bigg{(}a_5-\frac{1}{a_5}\bigg{)}\\
+\bigg{(}a_3-\frac{1}{a_3}\bigg{)}\bigg{(}a_6-\frac{1}{a_6}\bigg{)}\bigg{)},
\end{eqnarray}
\begin{eqnarray*}
q_2 = a_1 a_2 a_3 a_4 a_5 a_6 (a_1 a_4 + a_2 a_5 + a_3 a_6 + a_1a_2a_6 + a_1a_3a_5 + a_2a_3a_4 + \\a_4a_5a_6
+ a_1a_2a_3a_4a_5a_6).
\end{eqnarray*}

Given a function $f(x,y,\dots,z)$, let $f(x,y,\dots,z)\mid^{z=z_{-}}_{z=z_{+}}$ denote the difference $f(x,y,\dots,z_{-}) - f(x,y,\dots,z_{+})$. Now we define the following 
function $\mathscr{V} = \mathscr{V}(a_1,a_2,a_3,a_4,a_5,a_6,z)$ by means of the equality
\begin{equation}\label{eq:VDefinition}
\mathscr{V} = \frac{i}{4}\left( \mathscr{U}(a_1,a_2,a_3,a_4,a_5,a_6,z) - z\, \frac{\partial \mathscr{U}}{\partial z}\, \log z \right)\bigg{\vert}^{z=z_{-}}_{z=z_{+}}.
\end{equation}

Depending on the type $t|\mathrm{a}\dots|\mathrm{p}\dots$ of the tetrahedron $T$, we consider the following cases. 

\textit{1. One pair of ultra-parallel faces.} This is a prism truncated tetrahedron $T$, that has the symbol $t|\mathrm{a}ijklm|\mathrm{p}n$ for $i,j,k,l,m,n\in\{1,2,3,4,5\}$. In Fig.~\ref{fig:tetr1}, we illustrate the case $t|\mathrm{a}12356|\mathrm{p}4$. 

\begin{figure}[h]
\begin{center}
\includegraphics* [scale=0.33]{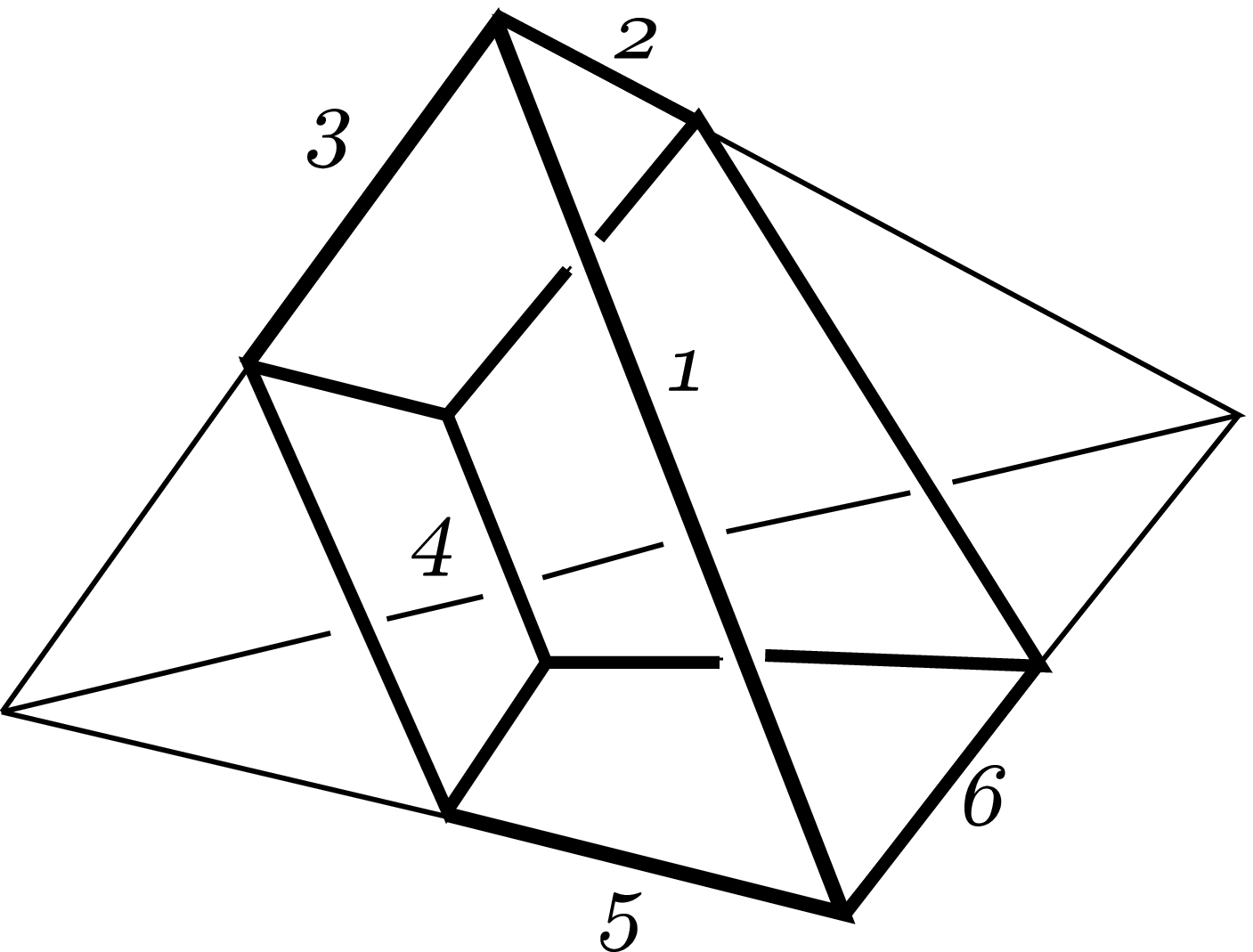}
\end{center}
\caption{A tetrahedron of type $t|\mathrm{a}12356|\mathrm{p}4$}\label{fig:tetr1}
\end{figure}

The volume of $T$ can be expressed by the formula
\begin{equation}\label{eq:vol1}
\mathrm{Vol}\,T = \Re \left( -\mathscr{V} + a_4\, \frac{\partial \mathscr{V}}{\partial a_4} \log a_4 \right),
\end{equation}
where $\mathscr{V} = \mathscr{V}(a_1,\dots, a_6)$ is the ``volume function'', as described in \cite{KM2014}. 

\textit{2. Two pairs of ultra-parallel faces.} In Fig.~\ref{fig:tetr2}, illustrate the case $t|\mathrm{a}2356|\mathrm{p}14$. Here, all the vertices of $T$ are ultra-ideal. 

\begin{figure}[h]
\begin{center}
\includegraphics* [scale=0.33]{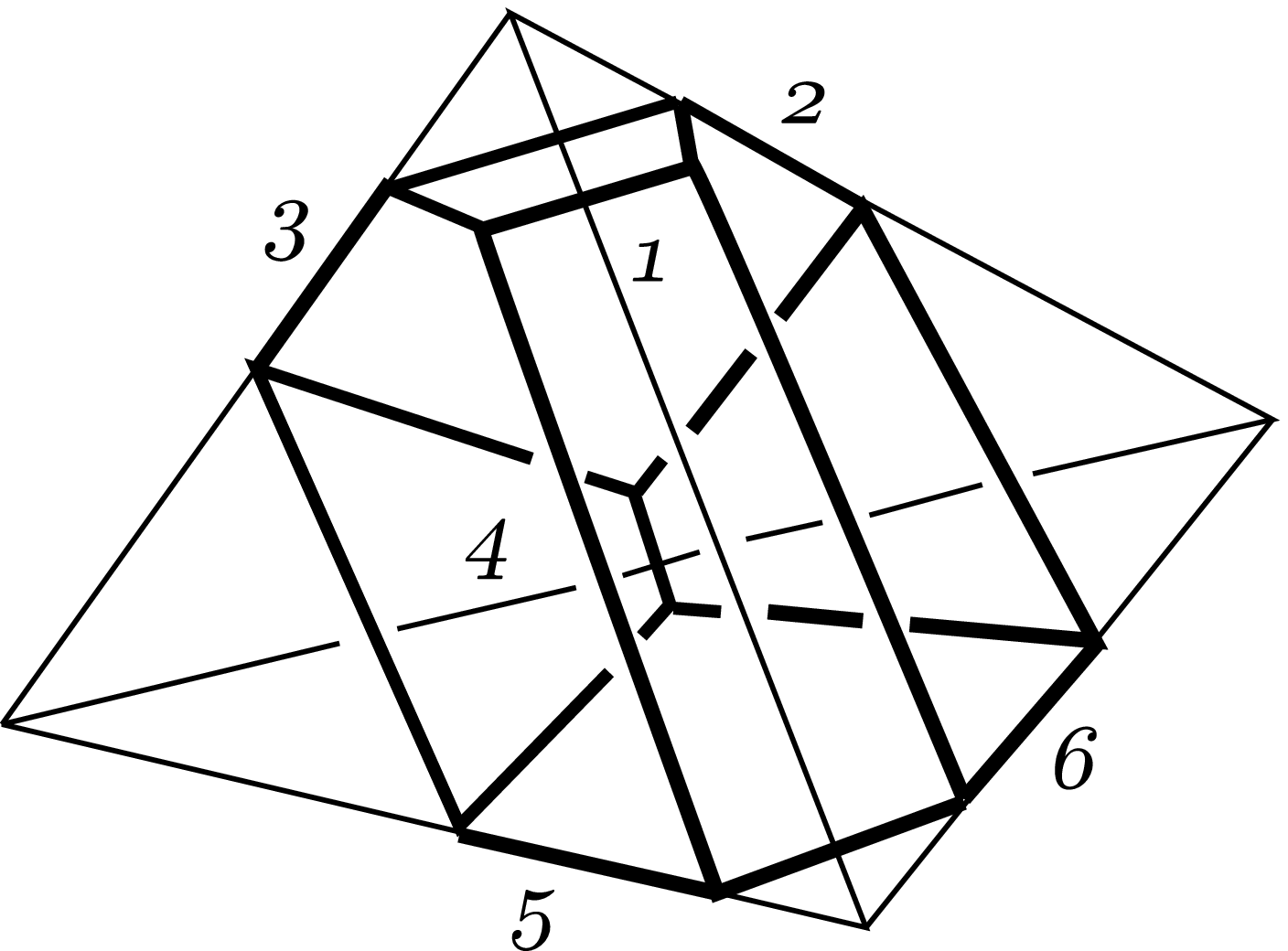}
\end{center}
\caption{A tetrahedron of type $t|\mathrm{a}2356|\mathrm{p}14$}\label{fig:tetr2}
\end{figure}

In complete analogy to the proof of \cite[Proposition 3]{KM2014}, we can show that the volume of $T$ in this case equals
\begin{equation}\label{eq:vol2}
\mathrm{Vol}\,T = \Re \left( -\mathscr{V} + \sum_{k\in \{1, 4\}} a_k\, \frac{\partial \mathscr{V}}{\partial a_k} \log a_k \right).
\end{equation}

\textit{3. Two pairs of ultra-parallel faces, another configuration.} A tetrahedron of type $t|\mathrm{a}1234|\mathrm{p}56$ is depicted in Fig.~\ref{fig:tetr3}. Here, three vertices $v_1$, $v_2$ and $v_3$ are ultra-ideal and the vertex $v_4$ is proper. 

\begin{figure}[h]
\begin{center}
\includegraphics* [scale=0.33]{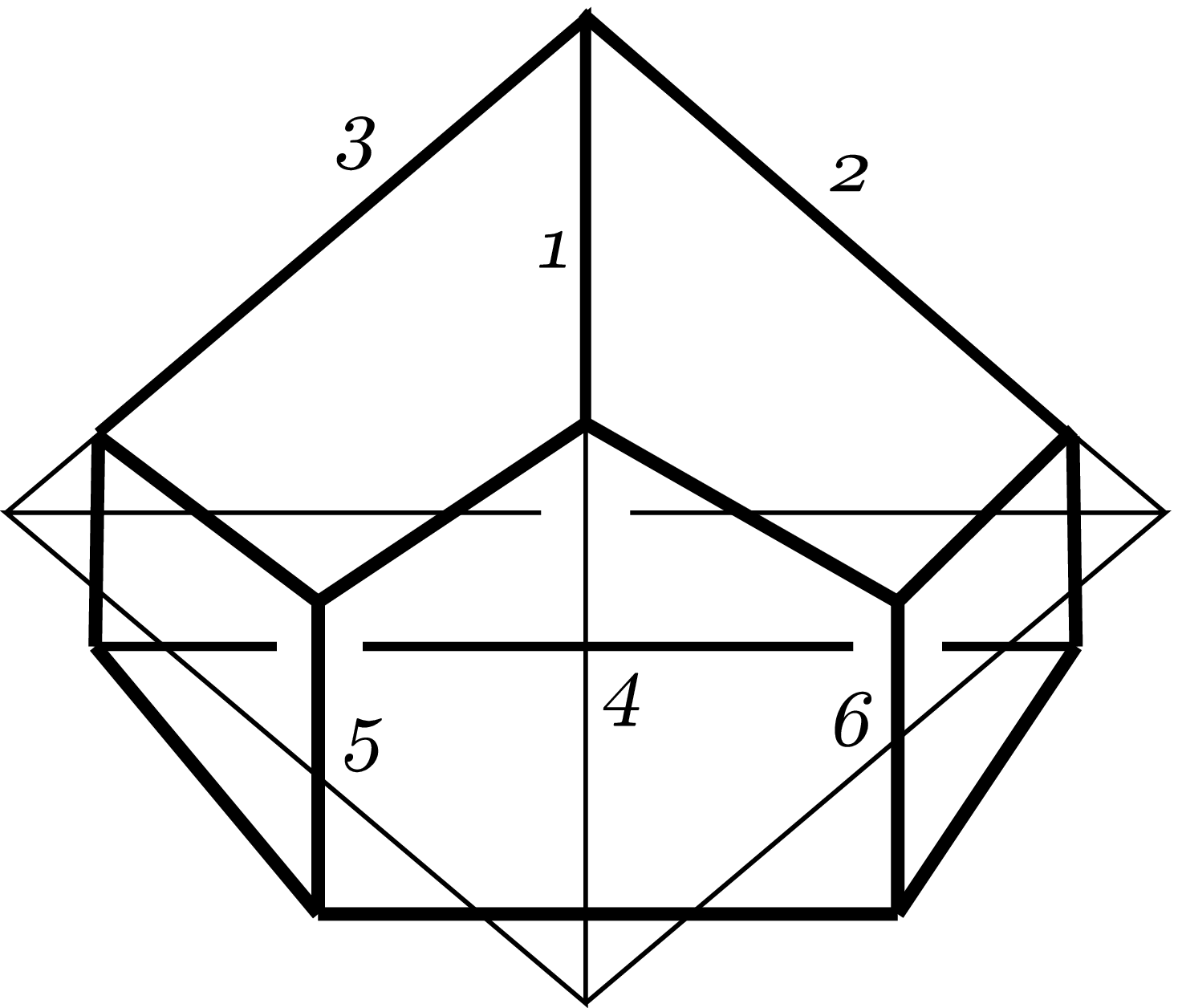}
\end{center}
\caption{A tetrahedron of type $t|\mathrm{a}1234|\mathrm{p}56$}\label{fig:tetr3}
\end{figure}

The volume of $T$ is
\begin{equation}\label{eq:vol3}
\mathrm{Vol}\,T = \Re \left( -\mathscr{V} + \sum_{k\in \{5, 6\}} a_k\, \frac{\partial \mathscr{V}}{\partial a_k} \log a_k \right).
\end{equation}

\begin{figure}[h]
\begin{center}
\includegraphics* [scale=0.33]{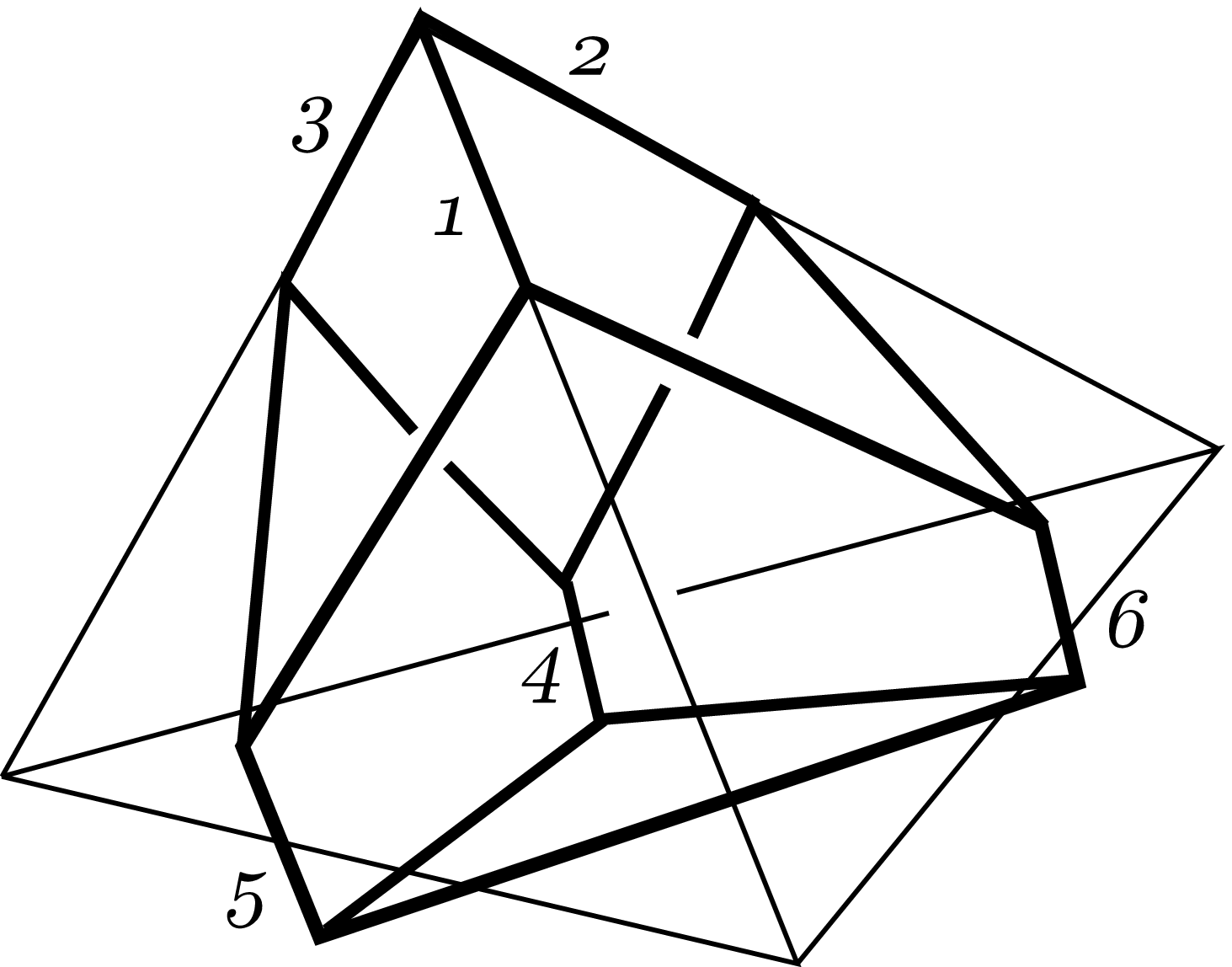}
\end{center}
\caption{A tetrahedron of type $t|\mathrm{a}123|\mathrm{p}456$}\label{fig:tetr4}
\end{figure}

\textit{4. One face that is ultra-parallel to three others.} In Fig.~\ref{fig:tetr4}, a tetrahedron of type $t|\mathrm{a}123|\mathrm{p}456$ is shown. Here, three faces are adjacent around the proper vertex $v_4$, while three other vertices $v_1$, $v_2$ and $v_3$ are ultra-ideal. 

The volume of $T$ is
\begin{equation}\label{eq:vol4}
\mathrm{Vol}\,T = \Re \left( -\mathscr{V} + \sum_{k\in \{4, 5, 6\}} a_k\, \frac{\partial \mathscr{V}}{\partial a_k} \log a_k \right).
\end{equation}

\begin{figure}[h]
\begin{center}
\includegraphics* [scale=0.33]{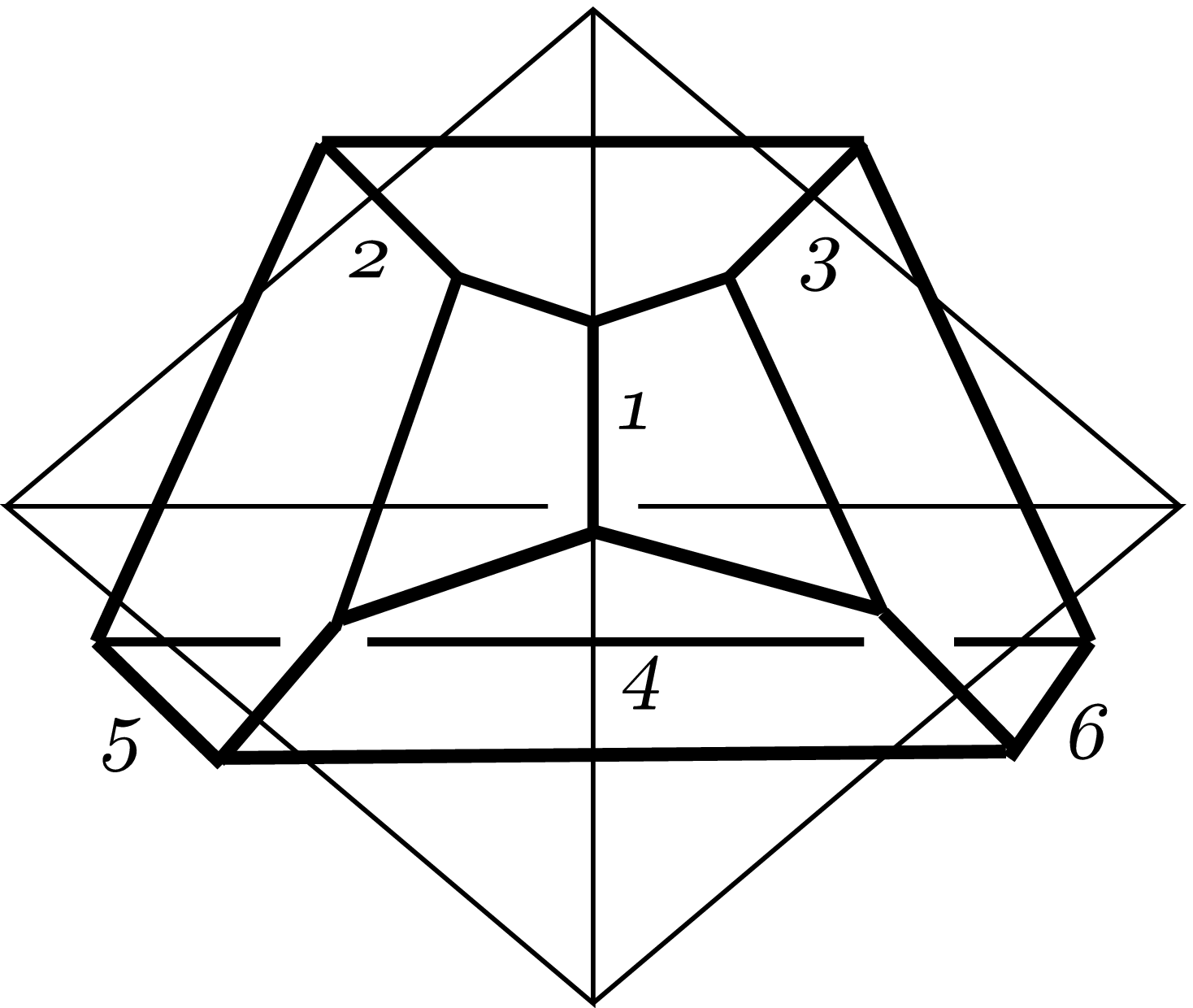}
\end{center}
\caption{A tetrahedron of type $t|\mathrm{a}14|\mathrm{p}2356$}\label{fig:tetr5}
\end{figure}

\textit{5. Four pairs of ultra-parallel faces.} We illustrate the case $t|\mathrm{a}14|\mathrm{p}2356$ in Fig.~\ref{fig:tetr5}. Here, $T$ has only ultra-ideal vertices and its faces are arranged in two groups: the faces $1$ and $2$ intersect along an edge, the faces $3$ and $4$ intersect along an edge, and each face in the pair $\{1, 2\}$ is ultra-parallel to any face in the pair $\{3, 4\}$.

The volume of $T$ equals
 \begin{equation}\label{eq:vol5}
\mathrm{Vol}\,T = \Re \left( -\mathscr{V} + \sum_{k\in \{2, 3, 5, 6\}} a_k\, \frac{\partial \mathscr{V}}{\partial a_k} \log a_k \right).
\end{equation}

\begin{lemma}
The volume of a generalised hyperbolic tetrahedron can be expressed by one of the formulae \eqref{eq:vol1}-\eqref{eq:vol5}.
\end{lemma}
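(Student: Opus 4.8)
The plan is to reduce the statement to a finite combinatorial classification of generalised tetrahedra, followed, for each type that survives, by an analytic‑continuation argument that transports a single base‑case volume formula across the parameter space. The two ingredients needed are the Schläfli variation formula for $\mathrm{Vol}\,T$ and the special structure of $\mathscr{V}$ built into \eqref{eq:VDefinition}.

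First I would classify the types $t|\mathrm{a}\dots|\mathrm{p}\dots$ up to the symmetry group $S_4$ acting by permutations of the four faces. The type is recorded by the set of length‑type parameters, equivalently by the graph $\bar H$ on the faces carrying one edge for each ultra‑parallel pair: by the conventions set above, $a_k$ is length‑type exactly when the off‑diagonal entry of \eqref{eq:matrixGram} along that edge has modulus greater than $1$. Requiring in addition that $G$ be Lorentzian and that the region $T$ obtained by truncating all ultra‑ideal vertices along their polar planes be non‑empty and of finite volume constrains $\bar H$. Running through the isomorphism types of subgraphs of $K_4$ and discarding those that are either non‑realisable or equivalent under relabelling to one already listed, one is left precisely with $\bar H$ a single edge, giving \eqref{eq:vol1}; a perfect matching, giving \eqref{eq:vol2}; a path on three vertices, giving \eqref{eq:vol3}; a star $K_{1,3}$, giving \eqref{eq:vol4}; and a four‑cycle, giving \eqref{eq:vol5}. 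The empty graph corresponds to an ordinary hyperbolic tetrahedron, handled by the Murakami--Yano formula \cite{MY}, which is the $\sum$‑free term common to \eqref{eq:vol1}--\eqref{eq:vol5}; realisability of the Gram matrix in each case is controlled as in \cite{KM2012, Jacquemet}. This enumeration is routine but must be carried out with care to be sure the five configurations are exhaustive.

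Next, for a fixed type with length‑type index set $A$, I would prove the claimed formula by induction on $|A|$. Passing from $\bar H$ to $\bar H$ with one extra edge amounts to analytically continuing one parameter $a_k$ from the unit circle, $a_k=e^{i\alpha_k}$ with $\alpha_k$ a dihedral angle, to the interval $(0,1)$, $a_k=e^{-\ell_k}$ with $\ell_k$ the length of the common perpendicular created by truncation, keeping $G$ Lorentzian along the path. The function $\mathscr{V}$ of \eqref{eq:VDefinition} is designed so that its logarithmic derivatives satisfy Schläfli‑type identities: because $z_\pm$ in \eqref{eq:Zpm} are the two roots of $e^{z\,\partial\mathscr{U}/\partial z}=1$, the contributions of $dz_\pm$ cancel upon differentiating \eqref{eq:VDefinition}, and $a_k\,\partial\mathscr{V}/\partial a_k$ reduces to $\tfrac12$ the length of the edge $e_k$, respectively to the matching combination of lengths in the truncated picture. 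Granting this, the differential of the right‑hand side of the asserted formula equals the Schläfli form $dV=-\tfrac12\sum_e \ell_e\,d\theta_e$ of $T$, extended to truncated tetrahedra by treating each polar plane as a right‑angled face, exactly as in the proof of \cite[Proposition~3]{KM2014}, which is the case $|A|=1$. Hence the two sides differ by a constant on each connected stratum of admissible parameters, and the constant is pinned down by degenerating $T$ — sending a dihedral angle to $0$ or $\pi$, or a perpendicular length to $0$ — where both sides vanish. Iterating from the classical formula yields \eqref{eq:vol1}, and then successively \eqref{eq:vol2}--\eqref{eq:vol5}.

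The main obstacle is the branch bookkeeping along these continuations: one has to track the determinations of the dilogarithms in \eqref{eq:UDefinition} and of $\log z$ and $\log a_k$ as $a_k$ leaves the unit circle, check that the pair $z_\pm$ varies continuously and stays on the sheet on which $\Re(\,\cdot\,)$ returns the hyperbolic volume rather than a neighbouring period, and ensure that the degenerations used to fix the integration constant are approached transversally so that no hidden period is collected. Modulo these analytic points the Schläfli matching is a direct computation with $\mathscr{U}$, $\mathscr{V}$ and the explicit $q_0,q_1,q_2$, and I expect the combinatorial classification, though finite, to likewise demand some attention to confirm completeness.
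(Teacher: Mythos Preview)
Your proposal follows essentially the same route as the paper: both defer the analytic core to the Schl\"afli--variation argument of \cite[Proposition~3]{KM2014}, with the formulae \eqref{eq:vol1}--\eqref{eq:vol5} viewed as specialisations of \cite[Theorem~1]{KM2012}. Your write-up is considerably more detailed than the paper's one-line reference, in particular supplying the combinatorial classification of ultra-parallel patterns and making the analytic-continuation and branch-tracking steps explicit.
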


The proof of the lemma above is analogous to the proof of \cite[Proposition 3]{KM2014} (each of the above formulae are simplified versions of \cite[Theorem 1]{KM2012}). We also refer the reader to \cite{ChoKim} for a geometric interpretation of the extra terms involving derivatives of $\mathscr{V}$ in the volume formulae above.

Let us now reconsider the previous examples of a pentagonal prism $\Pi$ and a pleated prism $\mathscr{P}$. We shall derive a geometric decomposition for each of them from the associated sequence of combinatorial transformations.

\begin{example}
Let us consider the first transformation $ih_1$ of the prism $\Pi$, as depicted in Fig.~\ref{fig:prism-moves}. Let us suppose that the common perpendicular $p_{67}$ to the plane $6$ and the plane $7$ is situated entirely inside $\Pi$. Then we draw two planes $S_1$ and $S_2$ orthogonal to the sides $1$ and $2$ of $\Pi$. Suppose these planes land on the sides of $\Pi$ in its interior. Then we cut along $\Pi$ along the planes $1$ and $2$, and thus ``chop off'' a polyhedron $T_1$ bounded by the planes of the faces $1$, $2$, $6$ and $7$ intersecting along the respective edges (or, more precisely, parts of edges) of the initial prism $\Pi$, and the planes $S_1$ and $S_2$ intersecting along $p_{67}$. Let us note that $T_1$ is nothing but a generalised tetrahedron of type $t|\mathrm{a}12345|\mathrm{p}6$. 

The capping transformations $cap_1$, $cap_2$ and $cap_3$ in Fig.~\ref{fig:prism-moves} correspond to further ``chopping off'' of generalised tetrahedra (of type $t|\mathrm{a}12345|\mathrm{p}6$): 
\begin{itemize}
\item the tetrahedron $T_2$, formed by the planes $2$, $3$, $6$, $7$, and the planes $S_2$, $S_3$ (orthogonal to the faces $2$ and $3$, respectively),
\item the tetrahedron the tetrahedron $T_3$, formed by the planes $1$, $5$, $6$, $7$, and the planes $S_3$, $S_5$ (orthogonal to the faces $1$ and $5$, respectively),
\item the tetrahedron the tetrahedron $T_4$, formed by the planes $3$, $4$, $6$, $7$, and the planes $S_3$, $S_4$ (orthogonal to the faces $3$ and $4$, respectively).
\end{itemize} 

\begin{figure}[h]
\begin{center}
\includegraphics* [scale=0.33]{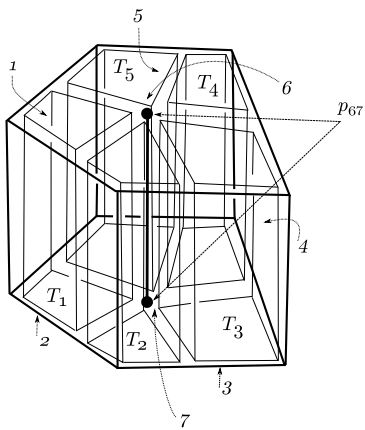}
\end{center}
\caption{A geometric decomposition of $\Pi$ into tetrahedra $T_i$, $i=1,\dots,5$, of type $t|\mathrm{a}12345|\mathrm{p}6$ }\label{fig:prism-moves-geometric}
\end{figure}

In each case we assume that the plane $S_k$ lands orthogonally on the $k$-th face of $\Pi$ in its interior. Thus, the prism $\Pi$ is divided into generalised tetrahedra. 

The last tetrahedron in Fig.~\ref{fig:prism-moves} corresponds to the generalised tetrahedron $T_5$ formed by the planes $4$, $5$, $6$, $7$ and $S_4$, $S_5$. 

The geometric decomposition of the prism $\Pi$ following the combinatorial moves in Fig.~\ref{fig:prism-moves} is depicted in Fig.~\ref{fig:prism-moves-geometric}, with all the tetrahedra $T_i$ marked. 
\end{example}

\medskip

\begin{example}
Let us consider the first transformation $ih_1$ of the pleated prism $\mathscr{P}$. Let us suppose that the common perpendiculars to any pairs of faces lie inside $\mathscr{P}$, if they exist. Let us take the common perpendicular $p_{67}$ to the plane $6$ and the plane $7$. Then we perform the transformation $ih_1$ on the edge $e_{12}$ (the intersection of the planes $1$ and $2$). This results in detaching a generalised tetrahedron $T_1$, corresponding to the planes $1$, $2$, $6$ and $7$ (the normals to these planes determine $T_1$ completely, however there are two more polar planes bounding it), whose edges consist of the edges involved in the $ih_1$ move, and the edges coming from the intersection of the planes $S_1$ and $S_2$, orthogonal to the faces $1$ and $2$, and passing through the common perpendicular $p_{67}$ to the plane $6$ and the plane $7$. The planes $S_1$ and $S_2$ intersect along $p_{67}$ and land on the faces $1$ and $2$, respectively, as well as they intersect the planes $6$ and $7$, yielding seven new edges in total. 

Here and below we suppose that each plane $S_k$, orthogonal to the $k$-th face of $\mathscr{P}$, lands on it orthogonally and inside $\mathscr{P}$. 

The transformations $cap_1$ and $cap_2$, which result in capping the triangular faces in Fig.~\ref{fig:pleated-prism-moves} mean further detaching of the generalised tetrahedra $T_2$ and $T_3$, formed by the planes $2$, $3$, $6$, $7$ and $1$, $5$, $6$, $7$, respectively. 

Now we consider the transformation $ih_2$ performed on the edge $e_{48}$ (the intersection of planes $4$ and $8$). We take the common perpendicular $p_{35}$ to the planes $3$ and $5$, and again suppose that it lies inside the pleated prism $\mathscr{P}$. We draw two planes passing through $p_{35}$: $S_4$ orthogonal to the face $4$, and $S_8$ orthogonal to the face $8$. Now, we detach one more generalised tetrahedron $T_4$ formed by the planes $3$, $4$, $5$ and $8$. 

The transformation $c_3$ and $c_4$ correspond to detaching two more tetrahedra: $T_5$ formed by the planes $3$, $5$, $6$ and $8$, and $T_6$ formed by the planes $3$, $5$, $4$ and $7$.

Here we supposed that all possible common perpendiculars are situated inside $\Pi$, the planes coming through the common perpendiculars orthogonally to the faces of $\mathscr{P}$ land in the interior of the faces. This implies that the generalised tetrahedra $T_k$ do not overlap. In other words, we suppose that the tetrahedra $T_k$ form a decomposition of $\mathscr{P}$: we shall write $\mathscr{P} = \sqcup_k T_k$. 
\end{example}

Below we give examples where the above decomposition is realised for a concrete polyhedron, having the combinatorial type of a pentagonal prism or a pleated prism and prescribed dihedral angles. 

\begin{figure}[h]
\begin{center}
\includegraphics* [scale=0.33]{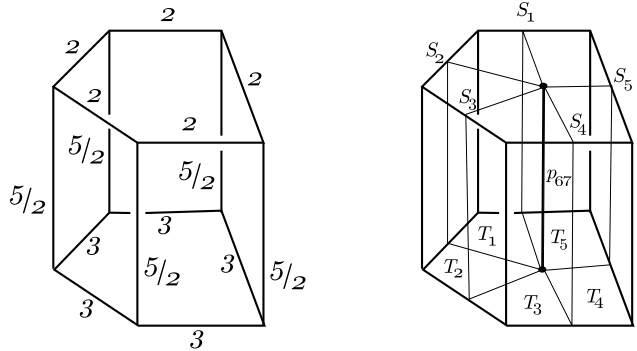}
\end{center}
\caption{A geometric realisation of the prism $\Pi$ with given dihedral angles, and its decomposition into generalised tetrahedra}\label{fig:prism-geometric}
\end{figure}

\begin{example}
Let us consider the pentagonal prism $\Pi$ depicted in Fig.~\ref{fig:prism-geometric}. Each edge of $\Pi$ is labelled with an label $q/p$, corresponding to a dihedral angle of $p\pi/q$. In this case, the common perpendicular $p_{67}$ to the planes $6$ and $7$ lies entirely inside the prism $\Pi$, and the planes $S_k$, $k=1,\dots,5$ passing through $p_{67}$ and orthogonal to the sides of $\Pi$ subdivide the prism into five ``prism truncated'' tetrahedra $T_k$ (i.e. tetrahedra of type $t|\mathrm{a}12345|\mathrm{p}6$), $k=\overline{1,5}$, which are mutually isometric. In this decomposition all the generalised tetrahedra $T_k$ have disjoint interiors, and $\Pi = \sqcup^{5}_{k=1} T_k$. The volume of each $T_k$, as well as the volume of the prism $\Pi$ is computed in \cite{KM2014}: $\mathrm{Vol}\,T_k = 0.52639$, $\mathrm{Vol}\,\Pi = 5 \cdot \mathrm{Vol}\,T_k \approx 2.63200$. Here we use formula \eqref{eq:vol1} to compute the volumes of all $T_k$, $k=1, \dots, 5$. 

As described in \cite{KM2014}, the volume computation proceeds by composing the function 
\begin{equation*}
\Phi(\ell) = \sum^{5}_{k=1} \mathrm{Vol}\,T_k + \pi \ell,
\end{equation*}
where $\ell$ is the length of the common perpendicular $p_{56}$, and solving the equation
\begin{equation*}
\frac{\partial \Phi(\ell)}{\partial \ell} = 0.
\end{equation*}

We can transform the last equation into an algebraic equation about $x = e^{-\ell}$:
\begin{equation*}
e^{\frac{\partial \Phi}{\partial \ell}} = 1.
\end{equation*}
This equation can be easily solved, which allows us to determine the necessary parameter $\ell$.
\end{example}

\begin{figure}[h]
\begin{center}
\includegraphics* [scale=0.33]{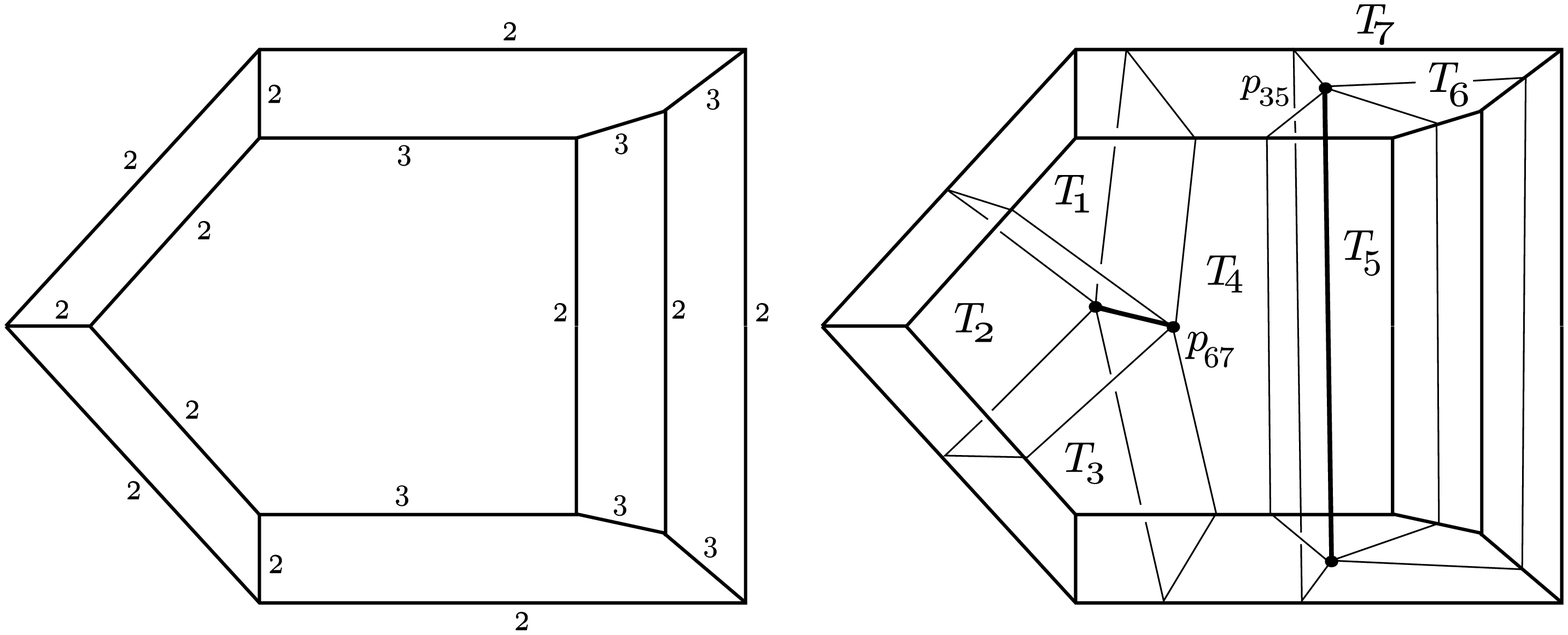}
\end{center}
\caption{A geometric realisation of the prism $\mathscr{P}$ with given dihedral angles, and its decomposition into generalised tetrahedra}\label{fig:pleated-prism-geometric}
\end{figure}

\begin{example}
Now we consider the pleated prism $\mathscr{P}$ depicted in Fig.~\ref{fig:pleated-prism-geometric} together with the labels for its dihedral angles. The common perpendiculars $p_1 = p_{67}$ (to the planes $6$ and $7$) and $p_2 = p_{35}$ (to the planes $3$ and $5$) in this case lie inside $\mathscr{P}$, and the planes $S_k$, $k=1,\dots, 8$, passing through each of them, respectively, land orthogonally into the interior of the $k$-th face. Then the pleated prism $\mathscr{P}$ is decomposed into seven generalised tetrahedra $T_k$: six are of type $t|\mathrm{a}12345|\mathrm{p}6$ (the tetrahedra $1$-$3$ and $5$-$7$), and one is of type $t|\mathrm{a}2356|\mathrm{p}14$ (the tetrahedron $4$ in the middle). 

We compute the volume of $\mathscr{P}$. In order to do so, let $\ell_k$, $k=1,2$, denote the length of $p_k$. Then we have
\begin{align*}
\mathrm{Vol}\,\mathscr{P} = \sum^{7}_{k=1} \mathrm{Vol}\,T_k,
\end{align*}
where we use formula \eqref{eq:vol1} for the tetrahedra $T_k$, $k\in \{1, 2, 3, 5, 6, 7\}$ (of type $t|\mathrm{a}12345|\mathrm{p}6$) and formula \eqref{eq:vol2} for the tetrahedron $T_4$ (of type $t|\mathrm{a}2356|\mathrm{p}14$). 

The tetrahedra $T_k$ form the pleated prism $\mathscr{P}$ when the angle sum around each $p_l$ equals $2\pi$.  The dihedral angle along the edge playing the role of $p_l$ in each generalised tetrahedron equals 
\begin{equation*}
\alpha_{p_l}(T_k) = 2 a_6 \frac{\partial \mathscr{V}}{\partial a_6}(T_k) \mod \pi,
\end{equation*}
with $l=1$ for the tetrahedra $T_k$, $k\in\{1, 2, 3\}$ and $l=2$ for the tetrahedra $T_k$, $k\in\{5, 6, 7\}$, while 
\begin{equation*}
\alpha_{p_1}(T_4) = 2 a_1 \frac{\partial \mathscr{V}}{\partial a_1}(T_4) \mod \pi,\hspace*{0.25in} \alpha_{p_2}(T_4) = 2 a_4 \frac{\partial \mathscr{V}}{\partial a_4}(T_4) \mod \pi.
\end{equation*}

Let us put
\begin{equation*}
\Phi(\ell_1, \ell_2) = \sum^{7}_{k=1} \mathrm{Vol}\,T_k + \pi \sum_{k\in\{1, 2\}} \ell_k. 
\end{equation*}

Then, from the above equations, we deduce that $\ell_k$ is a solution to the system of equations
\begin{equation*}
\frac{\partial \Phi}{\partial \ell_k} = 0,\,\,k=1,2.
\end{equation*}
We can transform the above equations into a system of algebraic equations in the variables $x_k = e^{-\ell_k}$:
\begin{equation*}
e^{\frac{\partial \Phi}{\partial \ell_k}} = 1,\,\,k=1,2.
\end{equation*}
This equation is as a polynomial equation in the variables $x_k$, and can be easily solved by computer. Moreover, the functions $\frac{\partial \Phi}{\partial \ell_k}$ are monotone in each variable $\ell_k$, $k=1,2$, while keeping the other variable fixed (in the domain where the function $\Phi$ is defined). This follows from the fact that the respective dihedral angles of the tetrahedra $T_k$, $k=1,\dots,7$, are monotone functions of $\ell_k$, $k=1,2$ (cf. Fig.~\ref{fig:angles-monotone}). 

\begin{figure}[h]
\begin{center}
\includegraphics* [scale=1]{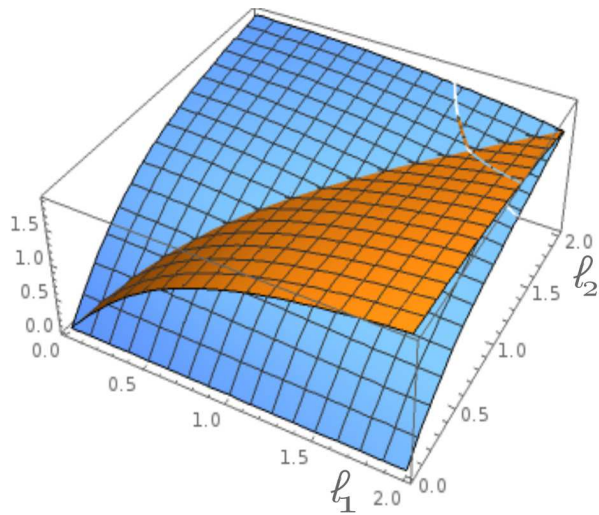}
\end{center}
\caption{The dihedral angles of the tetrahedron $T_4$ as fucntions of edge lengths $\ell_1$ and $\ell_2$ (the graph is symmetric under exchanging $\ell_1$ and $\ell_2$)}\label{fig:angles-monotone}
\end{figure}

Solving the above equations about $\ell_k$, we obtain that they have a unique common solution $(\ell_1, \ell_2) \approx (0.383438, 1.06239)$, and the volume of $\mathscr{P}$ is consequently equal to $\sim 2.34308$. Here we use formula \eqref{eq:vol1} to compute the volumes of $T_k$, $k\in \{1, 2, 3, 5, 6, 7\}$, and formula \eqref{eq:vol2} to compute the volume of $T_4$.
\end{example}

\section{Volume of a polyhedron}\label{section:volume-polyhedron}

We prove the following theorem, that allows us for computing the volume of a finite-volume simple polyhedron $P$, provided it has a decomposition into generalised tetrahedra, e.g. produced by a sequence of $IH$-moves and ``capping'' transformations. An analogous synthesis of a combinatorial decomposition followed by a geometric volume computation may be found in \cite{F, L} as applied to Euclidean polyhedra.

\begin{theorem}\label{thm:main}

Let $P \subset \mathbb{H}^3$ be a finite-volume simple polyhedron that admits a decomposition into generalised tetrahedra $T_k$ such that the volume of $P$ can be expressed as 
\begin{equation}\label{eq:volume}
\mathrm{Vol}\, P = \sum \mathrm{Vol}\,T_k(a_{k1}, \dots, a_{k6}), 
\end{equation}
where $a_{kl}$ are the six angle/length parameters corresponding to each generalised tetrahedron, the angle parameters $a_{kl} = e^{i \alpha_{kl}}$ come from the original dihedral angles of the polyhedron, and the edge parameters $a_{kl} = e^{- \ell_{kl}}$ are determined from a solution to the system of algebraic equations, in which each equation is associated with a common perpendicular $p_{kl}$ to some faces $F_k$ and $F_l$ of $P$ serving as an edge for a number of generalised tetrahedra $T_m$,
\begin{equation}\label{eq:glueing-1}
e^{\frac{\partial \Phi_{kl}(\ell_{kl})}{\partial \ell_{kl}}} = 1, 
\end{equation}
with
\begin{equation}\label{eq:glueing-2}
\Phi_{kl}(\ell_{kl}) = \sum_{p_{kl} \cap T_m \neq \emptyset} \mathrm{Vol}\,T_m(a_{m1}, \dots, a_{m6}) + \pi \sum \ell_{kl}, 
\end{equation}
where each monomial in the variables $a_{kl}$ has degree at most four. 
\end{theorem}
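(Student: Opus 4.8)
The plan is to reduce Theorem~\ref{thm:main} to two facts. The first, built into the hypothesis on the decomposition, is that $\mathrm{Vol}\,P=\sum_k\mathrm{Vol}\,T_k$ as soon as the generalised tetrahedra $T_k$ genuinely tile $P$. The second, and the real content, is that the critical point system \eqref{eq:glueing-1}--\eqref{eq:glueing-2} cuts out exactly the ``geometric'' values of the free parameters, i.e.\ those for which the $T_k$ do fit together into $P$. One first isolates the free parameters: the angle parameters $a_{kl}=e^{i\alpha_{kl}}$ attached to edges that are parts of edges of $P$ are prescribed by the dihedral angles of $P$; the angle parameters attached to edges lying on a cutting plane $S_k$ all equal $\pi/2$, since $S_k$ is orthogonal to the face $F_k$ it lands on and contains the common perpendicular through which it was drawn, hence is orthogonal to the two faces that this perpendicular joins; and overlap-freeness of the $T_k$ and the landing of the $S_k$ inside faces are open conditions secured by the set-up. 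Hence the only free parameters are the lengths $\ell_{kl}$ of the common perpendiculars $p_{kl}$, and the only gluing equations not automatically satisfied are the cone conditions that around each interior edge $p_{kl}$ the dihedral angles of the incident tetrahedra sum to $2\pi$, i.e.\ $\Theta_{kl}:=\sum_{p_{kl}\cap T_m\neq\emptyset}\alpha_{p_{kl}}(T_m)=2\pi$.

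To prove that this cone system is equivalent to \eqref{eq:glueing-1} I would differentiate. Holding all angle parameters fixed and regarding each $\mathrm{Vol}\,T_m$ as a function of the lengths $\ell_{kl}$ via $a=e^{-\ell_{kl}}$, the generalised Schl\"afli formula $d\mathrm{Vol}\,T=-\tfrac12\sum_e\ell_e\,d\theta_e$, combined with the structural identities $\alpha_{p_{kl}}(T_m)=2a\,\partial_a\mathscr V(T_m)\pmod\pi$ used in the worked examples (proved in the manner of \cite[Prop.~3]{KM2014}), express $\partial_{\ell_{kl}}\mathrm{Vol}\,T_m$ purely through the dihedral angles of $T_m$ along its length-type edges, the angle-type edges of $T_m$ carrying constant dihedral angles and thus dropping out. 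Summing over the tetrahedra incident to $p_{kl}$ and adding the derivative $\pi$ of the linear term, the contributions reorganise into a multiple of $\Theta_{kl}-2\pi$ together with cross terms coming from tetrahedra incident to several common perpendiculars; by symmetry of the mixed second derivatives of $\mathscr V$ these cross terms are consistent between the different equations, so that a simultaneous solution of $\{\partial_{\ell_{kl}}\Phi_{kl}=0\}$ is a simultaneous solution of $\{\Theta_{kl}=2\pi\}$, and conversely. The constant $\pi$ in \eqref{eq:glueing-2}, rather than $2\pi$, is exactly the one forced by the Schl\"afli normalisation $\tfrac12$ against a full turn $2\pi$ around an interior edge. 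This mirrors the way Thurston's gluing equations, or the Neumann--Zagier potential, carve out the hyperbolic structure on an ideally triangulated manifold.

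For the remaining claims, note first that \eqref{eq:glueing-1} is algebraic of the stated degree. Since $z\,\partial_z\mathrm{Li}_2(cz)=-\log(1-cz)$ and $\mathscr U$ in \eqref{eq:UDefinition} is a sum of dilogarithms of monomials of degree at most four, each $\ell_{kl}$-derivative of $\mathrm{Vol}\,T_m$ is a $\mathbb Z$-linear combination of logarithms of factors of the form $1-(\text{monomial of degree }\le 4)$; exponentiating and eliminating the auxiliary variable $z$ (via the quadratic \eqref{eq:Zpm} whose roots enter the volume formulas), $e^{\partial_{\ell_{kl}}\Phi_{kl}}$ becomes a rational function of the $x_{kl}=e^{-\ell_{kl}}$ whose numerator and denominator have monomials of degree at most four, so clearing denominators turns \eqref{eq:glueing-1} into a polynomial system of the asserted degree, solvable on a computer. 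Uniqueness of the solution, hence that it is the geometric one, follows as indicated in the examples from the monotonicity of $\alpha_{p_{kl}}(T_m)$ in $\ell_{kl}$ (the other lengths fixed) on the region where $\Phi$ is real-analytic, which makes $\partial_{\ell_{kl}}\Phi_{kl}$ strictly monotone in $\ell_{kl}$; substituting the solution into $\sum_k\mathrm{Vol}\,T_k$ then yields $\mathrm{Vol}\,P$ by the tiling hypothesis, which is \eqref{eq:volume}.

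The step I expect to be the main obstacle is the Schl\"afli bookkeeping of the second paragraph: carrying it out uniformly for all five types $t|\mathrm a\dots|\mathrm p\dots$ of \eqref{eq:vol1}--\eqref{eq:vol5}, with every sign, every $\pmod\pi$ ambiguity of the angle formulas, and every orientation convention correctly matched, and --- relatedly --- confirming that the cross terms of tetrahedra shared by several common perpendiculars genuinely cancel in the full system and not only in the decoupled examples above. A secondary point is to check that the common geometric solution lies in the domain where $\Phi$ (hence each $\Phi_{kl}$) is real-analytic, so that the monotonicity/uniqueness argument is legitimate; this is exactly where the standing assumptions that the common perpendiculars lie inside $P$ and that the planes $S_k$ land in the interiors of the faces of $P$ are used.
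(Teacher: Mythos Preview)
Your proposal is correct and follows essentially the same route as the paper: the Schl\"afli-type identity $\partial_{\ell_{kl}}\Phi_{kl}=-\tfrac12\Theta_{kl}+\pi$ reduces the critical-point system \eqref{eq:glueing-1} to the cone conditions $\Theta_{kl}=2\pi$, and the degree-four bound comes from the monomial structure of the dilogarithm arguments in $\mathscr U$. Your treatment is in fact more careful than the paper's terse proof, which simply asserts the Schl\"afli identity \eqref{eq:Schlaefli} and the degree bound without the cross-term bookkeeping, the uniqueness argument, or the domain considerations you anticipate as obstacles.
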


\begin{proof}
We have that 
\begin{equation}\label{eq:Schlaefli}
\frac{\partial \Phi_{kl}(\ell_{kl})}{\partial \ell_{kl}} = -\sum_{p_{kl} \cap T_m \neq \emptyset} \frac{\alpha_{kl}}{2} + \pi,
\end{equation}
where $\alpha_{kl}$ is the dihedral angle along an edge of a generalised tetrahedron $T_m$, corresponding to a common perpendicular $p_{kl}$ to some faces $F_k$ and $F_l$ of $P$. Since the polyhedron $P$ admits a decomposition into generalised tetrahedra $T_k$, then for some length parameters $\ell_{kl}$ (which will be the same for each tetrahedron $T_m$ whose edge $\ell_{kl}$ coincides with the common perpendicular $p_{kl}$) we have
\begin{equation}\label{eq:anglesum}
\sum_{p_{kl} \cap T_m \neq \emptyset} \alpha_{kl} = 2\pi.
\end{equation}
Thus, \eqref{eq:anglesum} implies \eqref{eq:Schlaefli} for these length $\ell_{kl}$. 

Looking at the expression for the function $\mathrm{Vol}\,T_k$, we observe that $\frac{\partial \Phi_{kl}(\ell_{kl})}{\partial \ell_{kl}}$ consists of logarithms of monomials in the parameters $a_{kl}$ where the longest monomial has the form $a_{k_1 l_1} a_{k_2 l_2} a_{k_3 l_3} a_{k_4 l_4}$. Thus, if all $a_{k_m k_n}$ happen to be edge parameters (i.e. $a_{k_m k_n} = e^{- \ell_{k_m k_n}}$) then we have an equation of degree four, at most. 
\end{proof}

Let us call a finite-volume polyhedron $P \subset \mathbb{H}^3$ ``width uniform'' if all the common perpendiculars to its facets lie inside $P$. The following fact obviously holds as a consequence of geodesic convexity.

\begin{lemma}
Every Coxeter polytope (or, more generally, every acute-angled polyhedron) is width uniform.  
\end{lemma}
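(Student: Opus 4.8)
The plan is to reduce the lemma to a projection property of acute‑angled polyhedra and then run a short ``ping‑pong'' argument between the two faces involved. Throughout, fix two facets $F_i,F_j$ of $P$ whose supporting planes $H_i,H_j$ are ultra‑parallel (if they meet, or are only asymptotically parallel, there is no common perpendicular to speak of), let $p_{ij}$ be the common perpendicular geodesic, and let $x_i=p_{ij}\cap H_i$, $x_j=p_{ij}\cap H_j$ be its feet. Writing $P=\bigcap_k H_k^{-}$ as an intersection of geodesically convex half‑spaces, I would first observe that it suffices to prove $x_i,x_j\in P$: indeed $H_j$ is then disjoint from $H_i$ and lies on the $P$‑side of it, so $[x_i,x_j]$ meets $H_i$ only at $x_i$ (hence $[x_i,x_j]\subseteq H_i^{-}$), symmetrically for $H_j$, and for every other facet $[x_i,x_j]\subseteq H_k^{-}$ follows by convexity of the half‑space from $x_i,x_j\in H_k^{-}$; thus $[x_i,x_j]\subseteq P$ and, in particular, $x_i\in H_i\cap P=F_i$, $x_j\in F_j$.

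The heart of the matter is the following projection lemma, which I would prove next: \emph{if $P$ is acute‑angled and $F$ is a facet with supporting plane $H$, then the nearest‑point projection $\pi_H\colon\mathbb{H}^3\to H$ carries $P$ into $F$.} To see this I would double $P$ across $H$. Since every dihedral angle of $P$ along an edge lying in $F$ is at most $\pi/2$, the union $Q=P\cup R_H(P)$, with $R_H$ the reflection in $H$, is again a convex polyhedron (the classical criterion for reflecting a polyhedron in a face), and $Q\cap H=F$ because $R_H$ fixes $H$ pointwise, so $R_H(P)\cap H=R_H(F)=F$. Now for $p\in P$ the point $\pi_H(p)$ is the midpoint of the geodesic segment from $p$ to its mirror image $R_H(p)\in R_H(P)\subseteq Q$; by convexity of $Q$ this midpoint lies in $Q$, and being on $H$ it lies in $Q\cap H=F$. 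Hence $\pi_H(P)\subseteq F$; applied to $H_i$ and $H_j$ this gives $\pi_{H_i}(F_j)\subseteq F_i$ and $\pi_{H_j}(F_i)\subseteq F_j$ since $F_i,F_j\subseteq P$.

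Finally I would run the ping‑pong. Because $H_i$ and $H_j$ have disjoint endpoint sets at infinity, $d(b,H_i)\to\infty$ as $b$ runs off to an ideal vertex of $F_j$, so $d(\cdot,H_i)$ attains its minimum $\mu$ on $F_j$, say at $b^{\ast}$. Set $a^{\ast}=\pi_{H_i}(b^{\ast})$; the projection lemma gives $a^{\ast}\in F_i$, and $d(a^{\ast},b^{\ast})=d(b^{\ast},H_i)=\mu$. If $d(a^{\ast},H_j)<\mu$ held, then $b'=\pi_{H_j}(a^{\ast})\in F_j$ (projection lemma again, as $a^{\ast}\in P$) would satisfy $d(b',H_i)\le d(b',a^{\ast})=d(a^{\ast},H_j)<\mu$, contradicting the minimality of $\mu$. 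Therefore $d(a^{\ast},H_j)=\mu=d(a^{\ast},b^{\ast})$, so $b^{\ast}=\pi_{H_j}(a^{\ast})$; together with $a^{\ast}=\pi_{H_i}(b^{\ast})$ this forces $[a^{\ast},b^{\ast}]$ to be perpendicular to both $H_i$ and $H_j$, i.e. to coincide with $p_{ij}$. Hence $x_i=a^{\ast}\in F_i\subseteq P$ and $x_j=b^{\ast}\in F_j\subseteq P$, which by the first paragraph is all that was needed. A Coxeter polytope is the special case in which the dihedral angles $\pi/m$ are automatically $\le\pi/2$.

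I expect the projection lemma to be the main obstacle --- specifically the claim that doubling an acute‑angled polyhedron across a facet stays convex, which is exactly where the hypothesis on dihedral angles is used; an obtuse angle along an edge of $F$ would destroy local convexity of the double, and indeed width‑uniformity already fails for a hyperbolic quadrilateral with one obtuse angle, where the common perpendicular of two of its sides leaves the quadrilateral. A secondary technical point, handled above via ultra‑parallelism, is that a facet of the (possibly non‑compact, finite‑volume) polyhedron $P$ may carry ideal vertices, so that $d(\cdot,H_i)$ need not a priori attain its infimum on $F_j$.
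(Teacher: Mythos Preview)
Your argument is correct. The paper, however, offers no proof at all: it simply records the lemma as an ``obvious consequence of geodesic convexity'' and moves on. What you have done is supply a complete justification of that one-line remark, and your route is a natural one. The projection lemma (nearest-point projection onto the supporting plane of a facet lands in that facet) is indeed the precise content of ``geodesic convexity'' for acute-angled polyhedra, and your doubling proof of it is standard and clean: acute dihedral angles along $F$ become angles $\le\pi$ in the double, so $Q=P\cup R_H(P)$ is locally convex along every edge, hence globally convex, and the midpoint trick finishes. The ping-pong step then converts the two one-sided projection statements into the two-sided statement that the feet of the common perpendicular lie in the respective facets; this is slightly more than the paper hints at, but it is exactly what is needed, and your handling of possible ideal vertices of $F_j$ via ultra-parallelism (disjoint boundary circles at infinity, so $d(\cdot,H_i)\to\infty$ near any ideal vertex of $F_j$) closes the only non-compactness loophole. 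In short: the paper asserts, you prove; nothing in your write-up is at odds with the paper's intent, and your ``main obstacle'' paragraph correctly identifies where the hypothesis is used and why it cannot be dropped.
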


A width uniform polyhedron has a geometric decomposition into generalised tetrahedra. We notice that each polytope $P$ may be decomposed into a number of generalised tetrahedra $T_k$, although some of them may overlap, and thus we shall need to use the ``inclusion-exclusion'' formula in order to express the volume of $P$ through the volumes of $T_k$. As well, some of the polar hyperplanes bounding $T_k$ may land on the faces of $P$ entirely or partially outside $P$ forming ``butterfly'' generalised tetrahedra, some of which are depicted in Fig.~\ref{fig:prism-butterfly}, c.f. \cite{KM2014}.

\begin{figure}[h]
\begin{center}
\includegraphics* [scale=0.33]{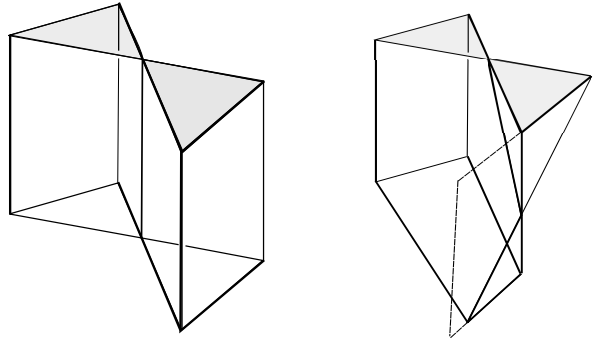}
\end{center}
\caption{Some examples of ``butterfly'' generalised tetrahedra}\label{fig:prism-butterfly}
\end{figure}

The volume formula of \cite{KM2014} works in either case, however, the glueing equations \eqref{eq:glueing-1}-\eqref{eq:glueing-2} will not have a unique solution. Still, the value of the volume of $P$ can be found by using \textit{some} solution among (usually few) solutions of the glueing equations. 

\section{Further computational examples}\label{section:compute-examples}

Below we give more numeric examples of volume computation for several polytopes. We start with the pleated prism $\mathscr{P}$, and then proceed to a dodecahedron $\mathscr{D}$. We shall use Coxeter dihedral angles (i.e. dihedral angles of the form $\pi/n$, for $n$ a natural number $\geq 2$), so we can ``double-check'' the volume of each polytope $P$ with the following procedure: the volume of $P$ equals the volume of the reflection orbifold obtained from $P$ by mirroring all its faces. We create a ``double'' of the respective reflection orbifold, which is an orientable orbifold $\mathscr{O}(P)$ with underlying space the three-sphere $\mathbb{S}^3$, and singular set combinatorially isomorphic to the $1$-skeleton of $P$. Each singular edge $e$ carries an angle of $2\pi/n$, corresponding to the angle of $\pi/n$ in the original polytope $P$. The volume of $\mathscr{O}(P)$ can be computed by the ``Orb'' software \cite{Orb}, and $\mathrm{Vol}\,\mathscr{O}(P) = 2\,\mathrm{Vol} P$. 

\begin{figure}[h]
\begin{center}
\includegraphics* [scale=0.33]{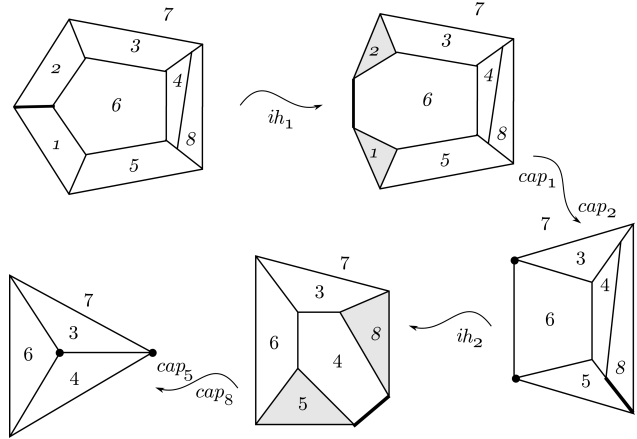}
\end{center}
\caption{Another sequence of $I-H$ and capping moves reducing the pleated prism $\mathscr{P}$ to a tetrahedron}\label{fig:pleated-prism-moves-2}
\end{figure}

First, we pick the pleated prism $\mathscr{P}$ from our previous example, and reduce it to a tetrahedron by a different sequence of $I-H$ and capping moves depicted in Fig.~\ref{fig:pleated-prism-moves-2}. Thus, the position of the common perpendiculars in the new subdivision will be different. 

\begin{figure}[h]
\begin{center}
\includegraphics* [scale=0.33]{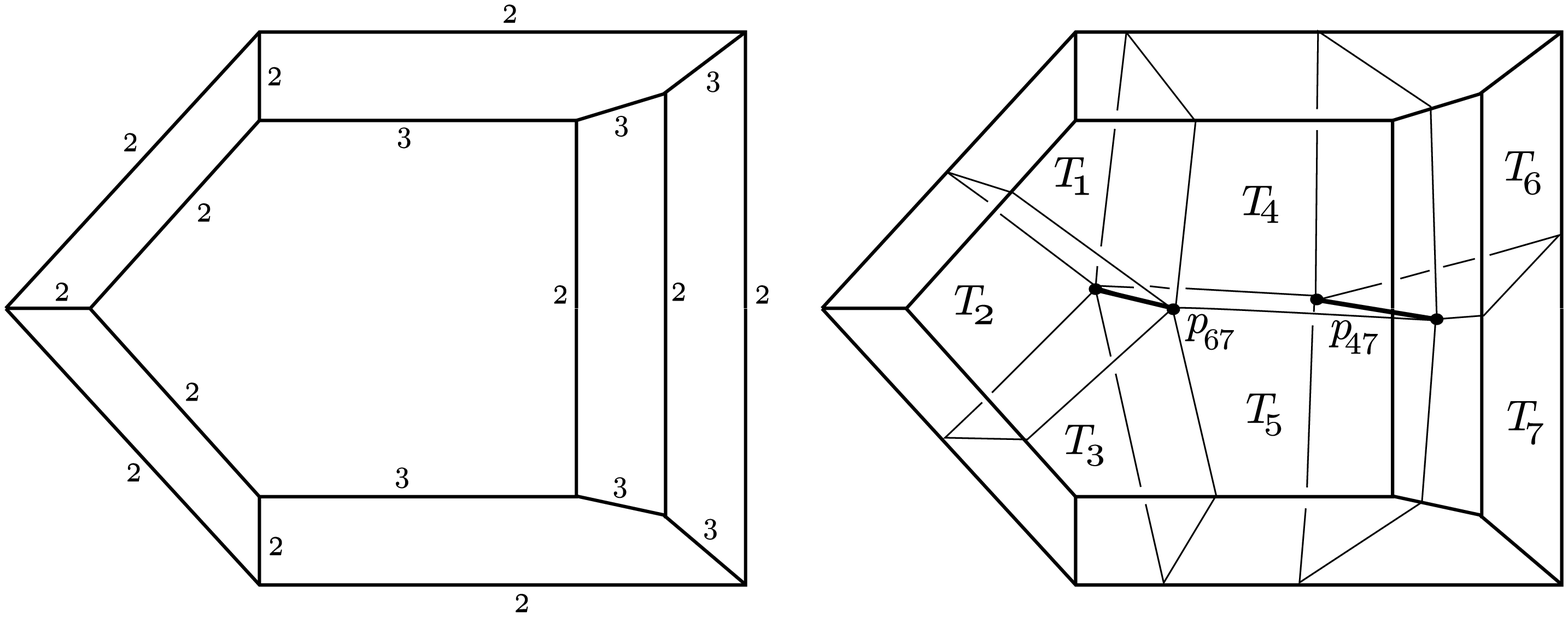}
\end{center}
\caption{A geometric realisation of the pleated prism $\mathscr{P}$ with given dihedral angles, and its decomposition into generalised tetrahedra}\label{fig:pleated-prism-geometric-2}
\end{figure}

As shown in Fig.~\ref{fig:pleated-prism-geometric-2}, we have again $7$ tetrahedra to consider, five of which are of the type $t|a12356|p4$ (the tetrahedra $T_i$, $i\in\{1,2,3,6,7\}$), and two are of the type $t|a1234|p56$ (the tetrahedra $T_i$, $i\in\{4,5\}$). Here, our computations find that the length of the common perpendicular $p_1 = p_{67}$ to the planes $6$ and $7$ is $\ell_1 \approx 0.383438$, and the length of the common perpendicular $p_2 = p_{47}$ to the planes $4$ and $7$ is $\ell_2 \approx 0.626516$. The volume of the prism $\mathscr{P}$ equals, as before, $\mathrm{Vol}\,\mathscr{P} \approx 2.34308$. 

By using various sequences of $I-H$ and capping moves, and therefore various geometric decompositions, we also found the volumes of pleated prisms depicted in Fig.~\ref{fig:pleated-prism-other}.

\begin{figure}[h]
\begin{center}
\includegraphics* [scale=0.33]{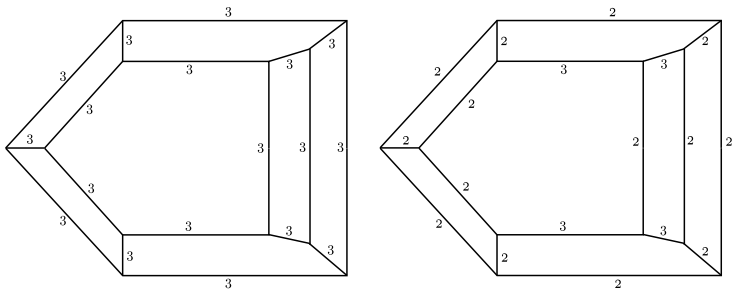}
\end{center}
\caption{Various pleated prisms with volumes $\sim 9.52855$ (left: this one has all ideal vertices) and $\sim 1.792925$ (right: it has many right angles and thus considerably smaller volume)}\label{fig:pleated-prism-other}
\end{figure}

Finally, we use our method to compute the volume of a hyperbolic dodecahedron $\mathscr{D}$. The sequence of $I-H$ and capping moves is presented in Fig.~\ref{fig:dodecahedron-moves-1}-Fig.~\ref{fig:dodecahedron-moves-3}, and leads to the following generalised tetrahedra in the geometric decomposition:
\begin{itemize}
\item $6$ generalised tetrahedra of type $t|\mathrm{a}12356\mathrm{p}4$,
\item $6$ generalised tetrahedra of type $t|\mathrm{a}1234\mathrm{p}56$,
\item $2$ generalised tetrahedra of type $t|\mathrm{a}123\mathrm{p}456$,
\item $2$ generalised tetrahedra of type $t|\mathrm{a}14\mathrm{p}2356$.
\end{itemize}

We consider three distinct dodecahedra: $\mathscr{D}_1$ with all right angles, $\mathscr{D}_2$ with all dihedral angles equal to $\pi/3$, and $\mathscr{D}_3$ with angles from the set $\{\pi/2, \pi/3, \pi/4\}$ distributed as depicted in Fig.~\ref{fig:dodecahedron-geometric}. The volumes of these dodecahedra equal respectively $\mathrm{Vol}\,\mathscr{D}_1 \approx 4.30621$, $\mathrm{Vol}\,\mathscr{D}_2 \approx 20.5802$, $\mathrm{Vol}\,\mathscr{D}_3 \approx 5.70085$. 

\afterpage{%

\begin{figure}[ht]
\begin{center}
\includegraphics* [scale=0.30]{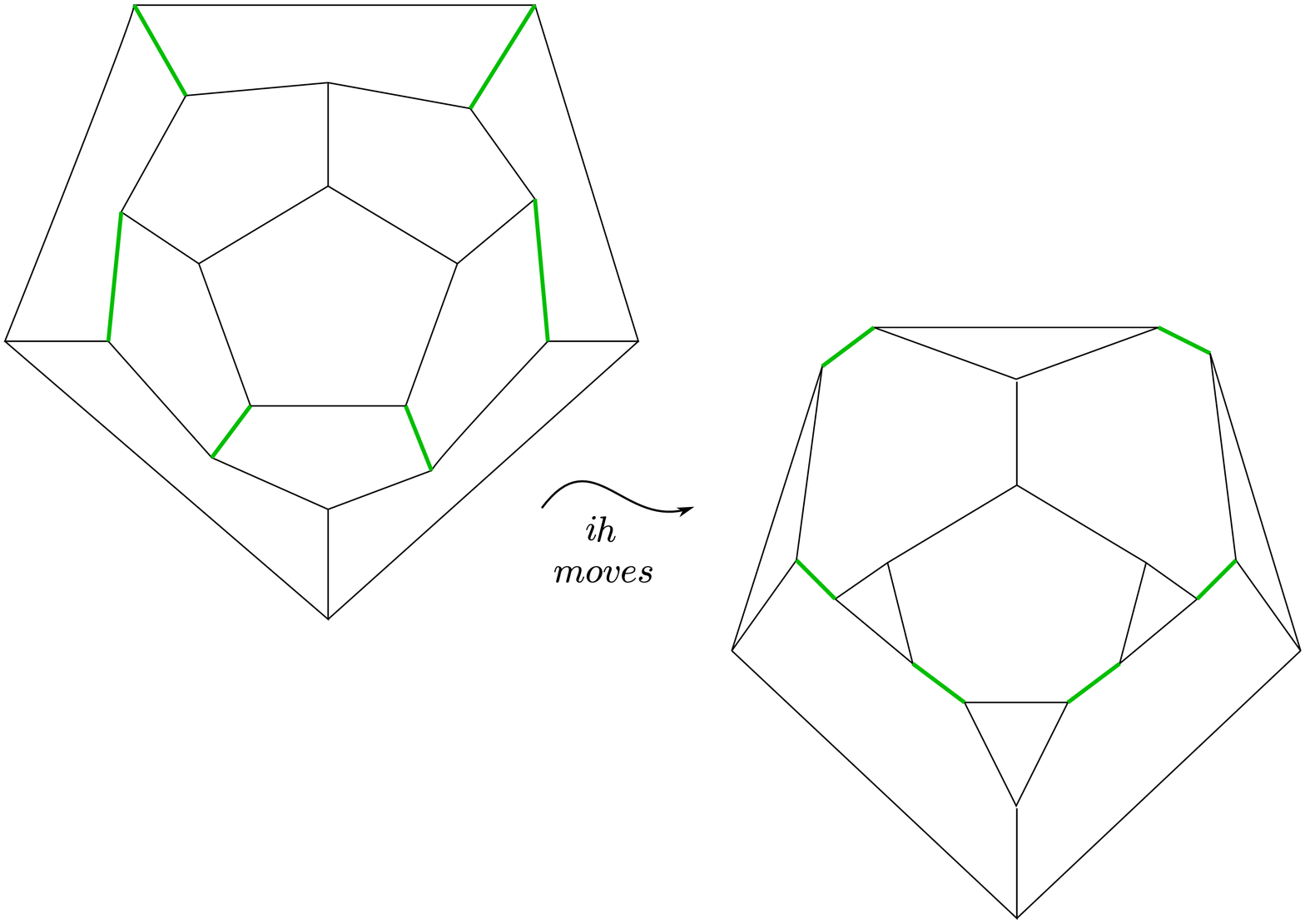}
\end{center}
\caption{The sequence of $I-H$ and capping moves for a dodecahedron}\label{fig:dodecahedron-moves-1}
\end{figure}

\begin{figure}[ht]
\begin{center}
\includegraphics* [scale=0.27]{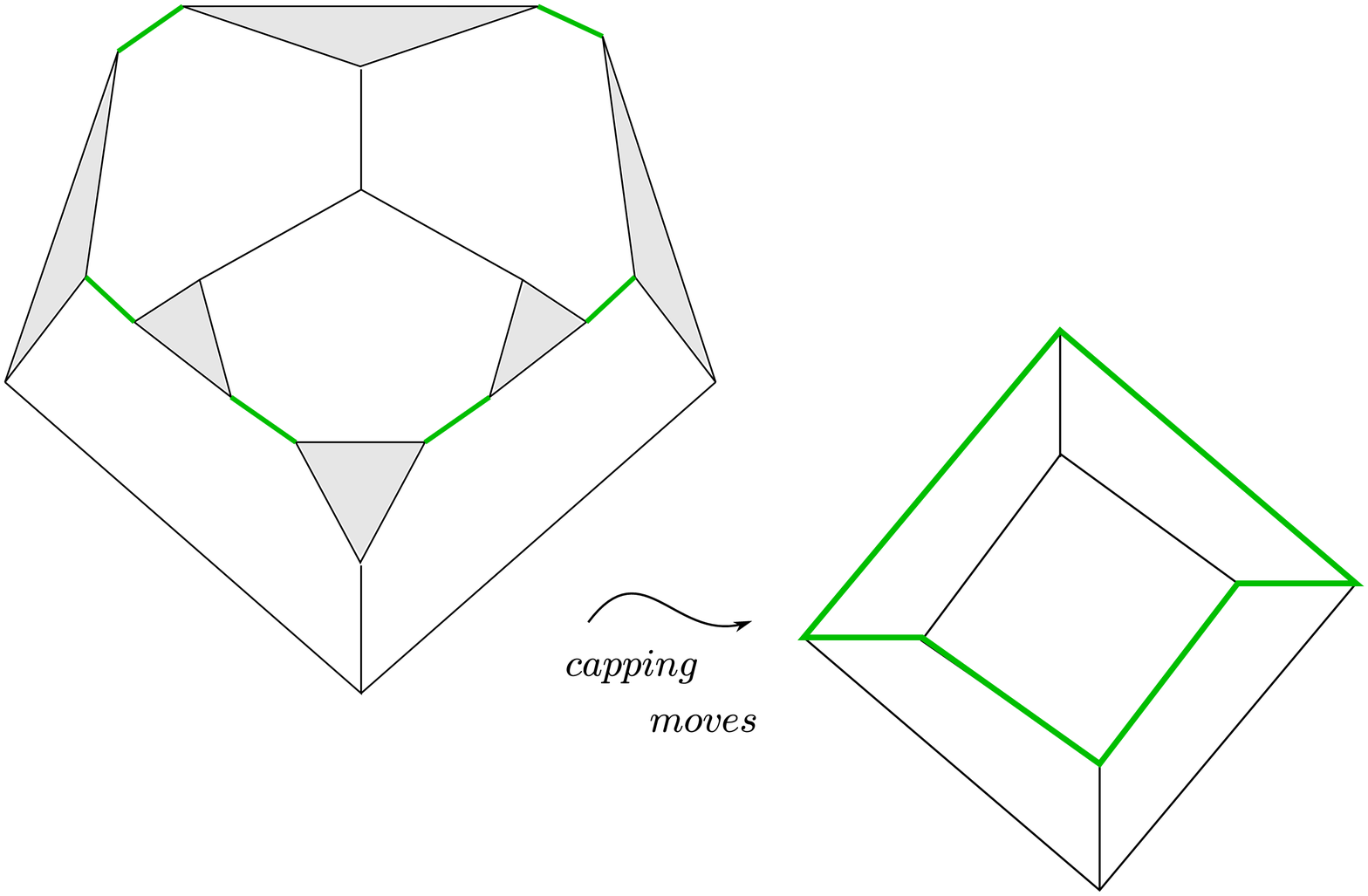}
\end{center}
\caption{The sequence of $I-H$ and capping moves for a dodecahedron}\label{fig:dodecahedron-moves-2}
\end{figure}

\clearpage
}

\afterpage{%

\begin{figure}[ht]
\begin{center}
\includegraphics* [scale=0.30]{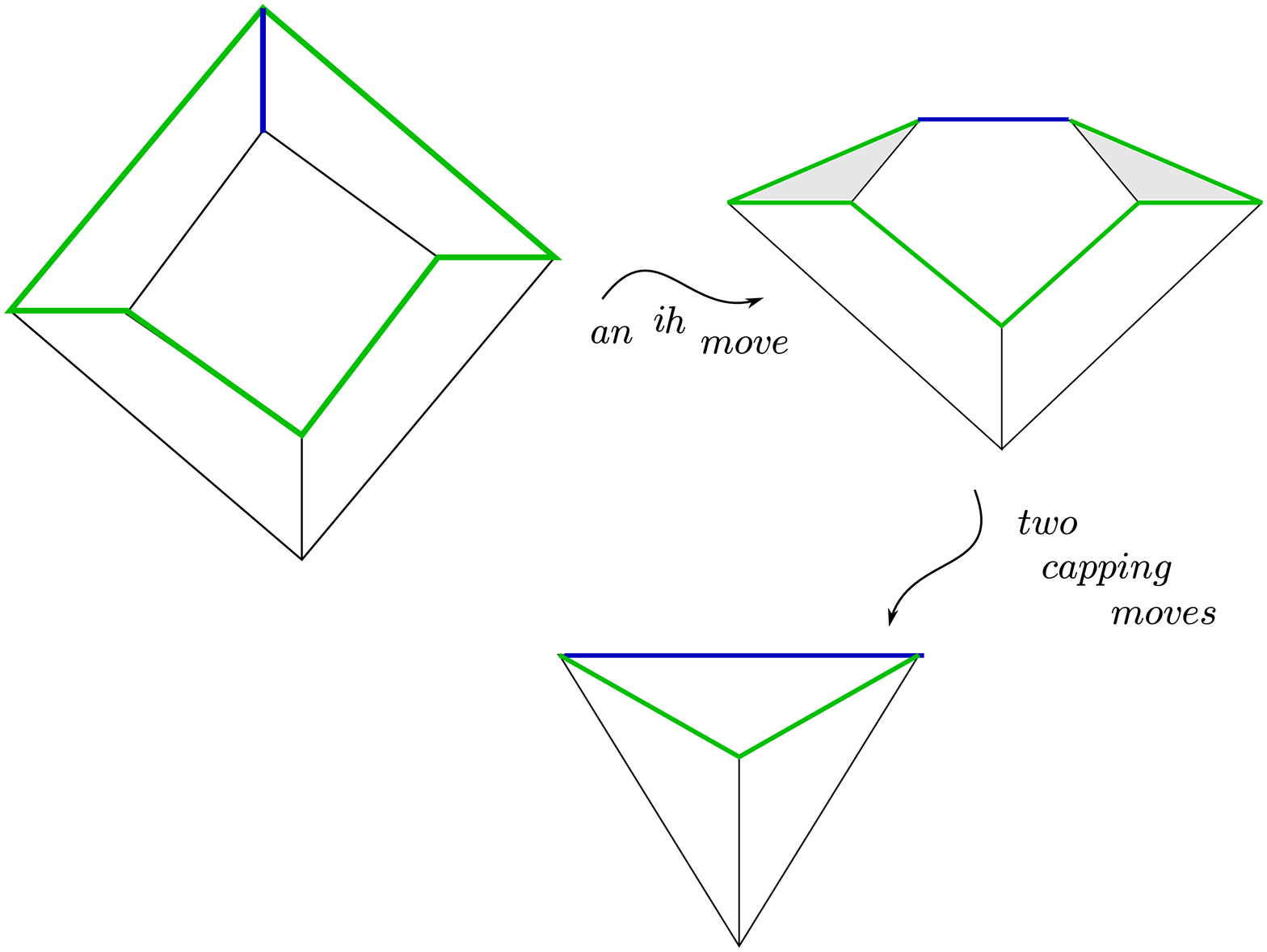}
\end{center}
\caption{The sequence of $I-H$ and capping moves for a dodecahedron}\label{fig:dodecahedron-moves-3}
\end{figure}

\begin{figure}[ht]
\begin{center}
\includegraphics* [scale=0.25]{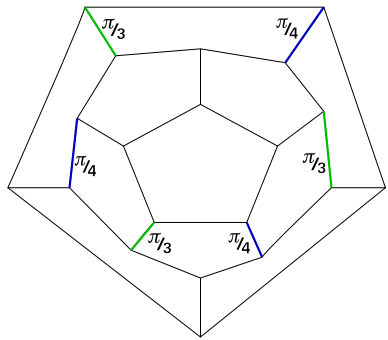}
\end{center}
\caption{A geometric realisation of the dodecahedron $\mathscr{D}_3$ with a rotational symmetry of order $3$. All unlabelled edges have dihedral angles $\pi/2$, the labelled ones have dihedral angles $\pi/3$ and $\pi/4$ in the alternating fashion.}\label{fig:dodecahedron-geometric}
\end{figure}

\clearpage
}

\section{Computing Kirillov-Reshetikhin invariants and the Volume Conjecture}\label{section:compute-conjecture}

We conjecture that the way in which a width-uniform polytope $P\subset \mathbb{H}^3$ is decomposed into generalised tetrahedra may be related to the sequence of combinatorial moves in which the Kirillov-Reshetikhin invariants of the corresponding trivalent graph $\Gamma$ (given by the $1$-skeleton of $P$) are computed. The following conjecture relating the volume $\mathrm{Vol}\, P$ to the asymptotic behaviour of the Kirillov-Reshetikhin invariants of $\Gamma$ has been corroborated by the results of our numerical experiments.

\begin{conj}\label{conj:KR-inv}
Let $P \subset \mathbb{H}^3$ be a hyperbolic polyhedron with edges $e_1$, $\cdots$, $e_m$ and dihedral angle along each edge $e_i$, $i=1,\dots,m$, respectively, $\alpha_i$. Let $\Gamma$ be the planar graph represented by the $1$-skeleton of $P$. Let  $c^{(r)}$ be a sequence of colourings $s_i^{(r)}$ $(1 \leq i \leq m, r = 3, 5, 7, \cdots)$ for the edges $e_1$, $\cdots$, $e_m$ of $\Gamma$ such that
$$
4\, \pi \, \lim_{n\to\infty}
\frac{s_i^{(2n+1)}}{2n+1} = \pi - \alpha_i,   
$$
then
$$
2\, \pi\,
\lim_{n\to\infty}
\frac{\log\langle\Gamma, c^{(2n+1)}\rangle}{2n+1} =
\mathrm{Vol}\, P,
$$
where $\langle \Gamma, c^{(2n+1)}\rangle$ is the unitary spin network (the Kirillov-Reshetikhin invariant with framing $c^{(2n+1)}$) associated with $\Gamma$, c.f. \cite{CM, CGV}.
\end{conj}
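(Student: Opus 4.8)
\textbf{Strategy towards Conjecture~\ref{conj:KR-inv}.} The plan is to unfold $\langle\Gamma,c^{(r)}\rangle$, $r=2n+1$, as a state sum over \emph{the same} sequence of $I-H$ and capping moves that produces the geometric decomposition $P=\sqcup_k T_k$ of Theorem~\ref{thm:main}, and then to extract its exponential growth rate by a Laplace/saddle-point analysis whose critical-point equations are exactly the gluing equations \eqref{eq:glueing-1}--\eqref{eq:glueing-2}. Concretely, reducing $\Gamma$ to a single tetrahedron by the chosen moves expresses
\[
\langle\Gamma,c^{(r)}\rangle \;=\; \sum_{t_1,\dots,t_N}\ \Bigl(\ \prod_{j} (-1)^{2t_j}[2t_j+1]\ \Bigr)\ \prod_k \{T_k\}_q,
\]
where $\{T_k\}_q$ is the quantum $6j$-symbol (Racah--Wigner coefficient) whose six entries are the colours of the edges of the generalised tetrahedron $T_k$: each $I-H$ move contributes one such $6j$-symbol, and each capping move one internal colour $t_j$ to be summed with a quantum-dimension weight. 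The summed colours $t_j$ are precisely the colours of those edges of the decomposition which geometrically realise the common perpendiculars $p_{kl}$.

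Next I would feed in the asymptotics of the quantum $6j$-symbol in the hyperbolic range, following Costantino--Murakami \cite{CM} (cf.\ also \cite{CGV}). Under the scaling of the hypothesis, $4\pi s_i/r \to \pi-\alpha_i$, the fixed edges acquire the angle parameters $a_{kl}=e^{i\alpha_{kl}}$; a summed colour $t_j$, when the configuration forces the corresponding vertices of $T_k$ to be ultra-ideal, is analytically continued past the quantum triangle inequality and in the limit plays the role of an edge parameter $a_{kl}=e^{-\ell_{kl}}$. With these identifications the $6j$-asymptotics of \cite{CM}, matched against the volume formulae \eqref{eq:vol1}--\eqref{eq:vol5}, give $\tfrac{2\pi}{r}\log\{T_k\}_q \to \mathrm{Vol}\,T_k(a_{k1},\dots,a_{k6})$ up to $O(r^{-1}\log r)$, while the weights $(-1)^{2t_j}[2t_j+1]$ together with the Jacobian of the continuation supply the linear terms $\pi\sum\ell_{kl}$. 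Thus, to exponential order, the state sum is governed by the potential $\Phi_{kl}$ of \eqref{eq:glueing-2}.

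Then comes the Laplace step: replacing the discrete sums by integrals over the rescaled colours, the leading asymptotics localise at the critical points of $\Phi_{kl}$, and $\partial\Phi_{kl}/\partial\ell_{kl}=0$ is, after exponentiation, precisely \eqref{eq:glueing-1}. By the Schl\"afli-type identity \eqref{eq:Schlaefli} and the angle-sum relation \eqref{eq:anglesum} of Theorem~\ref{thm:main}, the genuine common-perpendicular lengths of $P$ form such a critical point, at which $\Phi_{kl}$ equals $\mathrm{Vol}\,P$; monotonicity of the relevant dihedral angles in the edge lengths (as in Fig.~\ref{fig:angles-monotone}) would be used to show that this critical point dominates. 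Combining the three steps gives $2\pi\lim_{n\to\infty}\frac{\log\langle\Gamma,c^{(2n+1)}\rangle}{2n+1}=\mathrm{Vol}\,P$.

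The main obstacle is the second step. A fully rigorous and \emph{uniform} asymptotic expansion of the quantum $6j$-symbol over the whole hyperbolic range of colourings --- including degenerate and ``butterfly'' configurations, with error terms controlled uniformly in the summation variables so that the Laplace passage is legitimate --- is essentially the still-open Volume Conjecture for generalised tetrahedra of \cite{CM}, only parts of which are presently known. A secondary difficulty is the structural input of the conjecture itself: one must show that the recoupling moves computing $\langle\Gamma,c^{(r)}\rangle$ can always be chosen so as to match a \emph{bona fide} geometric decomposition of $P$, and, when ``butterfly'' tetrahedra occur and \eqref{eq:glueing-1}--\eqref{eq:glueing-2} admits several solutions, that some admissible critical point still delivers the correct volume, as noted after Theorem~\ref{thm:main}.
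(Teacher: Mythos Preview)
The statement you are asked to prove is labelled a \emph{conjecture} in the paper, and indeed the paper contains no proof of it: the authors only provide numerical evidence (Tables~\ref{tab:VC-prism}--\ref{tab:VC-pleated-prism}) and then formulate two further sub-conjectures (Conjectures~2 and~3) about the asymptotics of individual $6j$-symbols, closing with the sentence ``we hope that there might be a correspondence\dots''. So there is no paper proof to compare against.

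Your write-up is honest about this: you title it a \emph{strategy}, not a proof, and you correctly isolate the two genuine obstructions --- the lack of a uniform asymptotic expansion of the quantum $6j$-symbol over the full hyperbolic range (essentially the open part of \cite{CM}), and the need to align the recoupling sequence with a bona fide geometric decomposition, including the ``butterfly'' and non-unique cases. This is exactly the heuristic the paper is advocating, made somewhat more explicit than the paper itself does; in particular your identification of the summed internal colours with the common-perpendicular lengths and of the Laplace critical-point equations with \eqref{eq:glueing-1}--\eqref{eq:glueing-2} is the intended picture. One point to flag: your claim that the weights $(-1)^{2t_j}[2t_j+1]$ ``supply the linear terms $\pi\sum\ell_{kl}$'' is plausible at the level of exponential order but would need a careful bookkeeping of phases and moduli to be made precise; the paper does not attempt this either. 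In short, your proposal is a reasonable roadmap, consistent with the paper's philosophy, but --- as you yourself note --- it is not a proof, and neither is anything in the paper.
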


\begin{example}
Let $\Pi$ be the pentagonal prism depicted in Fig.~\ref{fig:prism-geometric}. Let $\Gamma$ be the planar graph given by the $1$-skeleton of $\Pi$, with edges $e_1$, $\dots$, $e_{15}$, and the corresponding dihedral angles $\alpha_1$, $\dots$, $\alpha_{15}$ along them. Here, we have that $\alpha_i \in \left\{ \frac{\pi}{2}, \frac{\pi}{3}, \frac{2\pi}{5} \right\}$. Let $c^{(r)}$ be the sequence of colourings $s_i^{(r)}$, $i=1,\dots,15$, $r=3,5,7,\dots$ of the edges $e_i$ of $\Gamma$ such that the conditions of Conjecture~\ref{conj:KR-inv} are satisfied. This means that we have $s_i^{(r)} = \frac{r-3}{4}$ for each $\alpha_i = \frac{\pi}{2}$, $s_i^{(r)} = \frac{r-3}{3}$ for each $\alpha_i = \frac{\pi}{3}$, and $s_i^{(r)} = \frac{3(r-3)}{10}$ for each $\alpha_i = \frac{2\pi}{5}$.

Then the unitary spin network associated with $\Gamma$ evaluates into
\begin{equation*}
\langle \Gamma, c^{(r)} \rangle = \sum_k (-1)^k \frac{\{k+1\}}{\{1\}} \left(\left\{
\begin{matrix}
\frac{r-3}{3}& \frac{r-3}{3}& \frac{3(r-3)}{10}\\ 
\frac{r-3}{4}& \frac{r-3}{4}& k
\end{matrix}
\right\}_{q=exp\left(\frac{4\pi i}{r}\right)}^{RW}\right)^5,
\end{equation*}
where $\left\{\begin{matrix}
a& b& e\\ 
d& c& f
\end{matrix}
\right\}_{q}^{RW}$ is the Racah-Wigner quantum $6j$-symbol, defined in \cite{KR} through the quantum numbers $\{n\}$ and quantum factorials $\{n\}!$.

In Table~\ref{tab:VC-prism} we collect the results of numerical computations of the quantity $V(r) = 2\pi\,\log \frac{\langle \Gamma, c^{(r)} \rangle}{r}$ for various odd values of $r$. 

\begin{table}[h]
\centering
\begin{tabular}{|c||c|c|c|c|c|}
\hline
$r$ & 483 & 963 & 1923 & 3843 & $\mathrm{Vol}\, \Pi$ 
\\
\hline
$V(r)$ & 2.27094 & 2.42388 & 2.51421 & 2.56627 & $\approx$ 2.63200
\\ \hline
\end{tabular}
\\[5pt]
\caption{The values of Kirillov-Reshetikhin invariants for $\Pi$}
\label{tab:VC-prism}
\end{table}

We also perform numerical computations for the pleated prism $\mathscr{P}$ depicted in Fig.~\ref{fig:pleated-prism-geometric}. The numeric values of its Kirillov-Reshetikhin invariants are given in Table~\ref{tab:VC-pleated-prism} for various odd values of $r$.

\begin{table}[h]
\centering
\begin{tabular}{|c||c|c|c|c|c|}
\hline
$r$ & 483 & 963 & 1923 & 3843 & $\mathrm{Vol}\, \mathscr{P}$ 
\\
\hline
$V(r)$ & 1.89647 & 2.08527 & 2.19702 & 2.26150 & $\approx$ 2.34308
\\ \hline
\end{tabular}
\\[5pt]
\caption{The values of Kirillov-Reshetikhin invariants for $\mathscr{P}$}
\label{tab:VC-pleated-prism}
\end{table}

Analogous, however way more tedious computations may be carried out in the case of the hyperbolic dodecahedra $\mathscr{D}_i$, $i=1,2,3$, described in Section~\ref{section:compute-examples}.
\end{example}

Each time an $I-H$ move or a capping move is performed on $\Gamma$, we have a generalised tetrahedron detached from the polyhedron $P$. As a step towards verification of Conjecture~\ref{conj:KR-inv} we suggest investigating if the following statements hold. 

\begin{conj}
Let $\alpha$, $\beta$, $\gamma$, $\delta$, $\epsilon$, $\phi$ be the dihedral angles of a generalised hyperbolic tetrahedron $T$, which may have ideal and ultra-ideal vertices, though no truncating polar planes determined by its ultra-ideal vertices intersect. Let $a^{(r)}$, $b^{(r)}$, $\cdots$, $f^{(r)}$, $r = 3, 5, 7, \cdots$ be sequences of integers satisfying
$$
4\, \pi\,\lim_{r\to\infty}\frac{a^{(r)}}{r} = \pi - \alpha, \quad
4\, \pi\,\lim_{r\to\infty}\frac{b^{(r)}}{r} = \pi - \beta, \quad
4\, \pi\,\lim_{r\to\infty}\frac{c^{(r)}}{r} = \pi - \gamma,
$$
$$
4\, \pi\,\lim_{r\to\infty}\frac{d^{(r)}}{r} = \pi - \delta, \quad
4\, \pi\,\lim_{r\to\infty}\frac{e^{(r)}}{r} = \pi -  \epsilon, \quad
4\, \pi\,\lim_{r\to\infty}\frac{f^{(r)}}{r} = \pi - \phi,
$$
where $r$ is always an odd integer. Then
\begin{equation}\label{eq:tetr-6j-1}
2\, \pi \, \lim_{r\to\infty}
\frac{1}{r}\, \log
\left\{\begin{matrix}
a^{(r)} & b^{(r)} & e^{(r)} \\
d^{(r)} & c^{(r)} & f^{(r)}
\end{matrix}\right\}_{q=\exp(\frac{4 \pi i}{r})}^{RW}
= {\rm Vol}(T).  
\end{equation}
\end{conj}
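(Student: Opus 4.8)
The plan is to prove the conjectured asymptotic expansion by combining an integral (or state-sum) representation of the Racah--Wigner quantum $6j$-symbol at the root of unity $q = \exp(4\pi i / r)$ with the method of stationary phase (saddle-point analysis), and then identifying the resulting critical value with the volume formula for a generalised tetrahedron recorded in Lemma~2 above. The left-hand side of \eqref{eq:tetr-6j-1} is, after all, exactly the kind of quantity for which such ``Volume Conjecture'' statements are established: the logarithm of the $6j$-symbol, rescaled by $r$, should converge to the extremal value of a potential function built out of dilogarithms, which is precisely $\mathscr{U}$ and $\mathscr{V}$ from \eqref{eq:UDefinition}--\eqref{eq:VDefinition}.

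\textbf{Step 1: A state-sum / integral form of the $6j$-symbol.} First I would write the Racah--Wigner symbol $\left\{\begin{smallmatrix} a & b & e \\ d & c & f\end{smallmatrix}\right\}_q^{RW}$ as a single-variable $q$-hypergeometric sum over an integer $k$ (the classical Racah sum with quantum factorials $\{n\}!$), exactly the sum appearing in the prism example above. With $q = \exp(4\pi i/r)$ and the colourings scaled so that $4\pi a^{(r)}/r \to \pi - \alpha$, etc., each quantum factorial $\{n\}!$ contributes, via the asymptotics of the quantum dilogarithm (Faddeev's quantum dilogarithm / Euler--Maclaurin applied to $\sum \log\{j\}$), a term of the form $\frac{r}{4\pi i}\,\mathrm{Li}_2(\text{monomial in the } a_i)$ up to lower-order corrections. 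Collecting the eight resulting dilogarithm terms and matching signs, the summand behaves like $\exp\!\big(\tfrac{r}{4\pi i}\,\mathscr{U}(a_1,\dots,a_6,z)\big)$ with $z = q^{2k}$ playing the role of the internal summation variable, where $a_j = e^{i\alpha_j}$ are read off from the six dihedral angles via the dictionary in Section~2. This is the technical heart of the argument and I expect it to occupy most of the write-up: one must track the boundary terms in Euler--Maclaurin, the contributions of the ``$(-1)^k\{k+1\}/\{1\}$'' prefactor, and the fact that $r$ is odd (which makes $q$ a primitive $r$-th root of $-1$ and controls which $k$ contribute).

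\textbf{Step 2: Saddle point and identification with the volume.} Having reduced the sum to $\sum_k \exp(\tfrac{r}{4\pi i}\mathscr{U}(\dots,q^{2k}))$, I would apply the saddle-point method: the dominant contribution comes from the solution $z_\pm$ of $e^{z\,\partial\mathscr{U}/\partial z} = 1$ given explicitly in \eqref{eq:Zpm}, and the leading exponential rate is $\tfrac{1}{4\pi i}$ times the critical value, which by definition \eqref{eq:VDefinition} is essentially $\mathscr{V}$ (the branch-difference $z_- \leftrightarrow z_+$ corresponds to choosing the saddle that maximizes the real part of the exponent). Then $2\pi\,\lim_r \tfrac1r\log(\cdot)$ equals $\Re$ of the appropriate combination of $\mathscr{V}$ and its derivatives — and by Lemma~2, for a generalised tetrahedron all of whose truncating polar planes are disjoint (which is the hypothesis of the conjecture), that combination is exactly one of \eqref{eq:vol1}--\eqref{eq:vol5}, hence $\mathrm{Vol}(T)$. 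The extra terms $a_k\,\partial\mathscr{V}/\partial a_k\log a_k$ in those formulae should emerge from the boundary/prefactor contributions of Step 1 (or, equivalently, from the shift of the saddle under the constraints imposed by ultra-ideal vertices), which is the mechanism described in \cite{ChoKim}.

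\textbf{Main obstacle.} The principal difficulty is \emph{uniformity and convergence of the asymptotics in the regime where $z_\pm$ is complex and the exponent is oscillatory rather than decaying}: one must justify that the discrete sum is genuinely dominated by a single saddle, that the contributions away from the saddle are subexponential, and that the formal quantum-dilogarithm asymptotics hold uniformly as the colourings scale with $r$ (in particular when some $a_j$ lie on the unit circle, so the $\mathrm{Li}_2$ arguments approach the branch cut). Controlling the error terms — showing they are $o(r)$ in the exponent — and handling the possibility that $\Re\mathscr{V}$ at the two saddles coincides (so that the two terms in the branch-difference interfere) are the steps where a fully rigorous proof will require the most care; for the present paper it may suffice to carry this out under a genericity assumption on the angles and to verify it numerically in the examples, as is done for Conjecture~\ref{conj:KR-inv}.
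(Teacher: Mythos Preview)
The statement you are attempting to prove is labelled \textsf{Conjecture} in the paper, and the paper offers no proof of it. What the authors actually do is state the conjecture, observe that it is the single-tetrahedron case of Conjecture~\ref{conj:KR-inv}, and then present numerical evidence (the plot in Fig.~\ref{fig:tetr-6j-1} comparing $\tfrac{2\pi}{r}\log|\{\cdots\}^{RW}|$ against $V(\alpha)$ for a regular tetrahedron at $r=101,301,1001$). There is therefore no proof in the paper against which to compare your proposal.

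Your outline is the standard strategy for Volume-Conjecture statements of this type and is essentially the programme of \cite{MY} and \cite{CM}: rewrite the Racah sum, replace quantum factorials by dilogarithms via quantum-dilogarithm asymptotics, apply steepest descent, and match the critical value to $\mathscr{V}$. That is a reasonable line of attack, and your identification of the saddle equation with $e^{z\partial\mathscr{U}/\partial z}=1$ and of $z_\pm$ from \eqref{eq:Zpm} is exactly right at the formal level. But you should be aware that the ``main obstacle'' you flag is not a technicality to be cleaned up later: it is precisely the reason this remains a conjecture. Controlling the oscillatory tails of the sum away from the saddle, proving that the relevant saddle actually lies on (or can be reached by deforming) the summation contour, and obtaining uniform error bounds when the $a_j$ sit on the unit circle are all genuinely open in the generality stated here. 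In particular, the case of truncated (ultra-ideal) vertices---where the hypothesis ``no truncating polar planes intersect'' is active---is not covered by the existing rigorous results, and the derivative corrections $a_k\,\partial\mathscr{V}/\partial a_k\,\log a_k$ do not simply ``emerge from boundary terms'' without a careful analytic-continuation argument that no one has yet supplied. So while nothing in your sketch is wrong as a heuristic, it is not a proof, and the paper does not pretend otherwise.
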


In Fig.~\ref{fig:tetr-6j-1} we illustrate the values of $V(\alpha) = \mathrm{Vol}\,T_\alpha$, for a regular tetrahedron with all dihedral angles equal to $\alpha$. The tetrahedron $T_\alpha$ is spherical for $\alpha > \arccos(1/3)$, Euclidean for $\alpha = \arccos(1/3)$, and hyperbolic for $\pi/3<\alpha < \arccos(1/3)$. It is ideal for $\alpha=\pi/3$, truncated (i.e. has ultra-ideal vertices) hyperbolic for $0<\alpha<\pi/3$, and it deforms into a regular ideal octahedron $O$ for $\alpha=0$. The octahedron $O$ has all dihedral angles equal to $\pi/2$  and its volume is $\mathrm{Vol}\, O \approx 3.663...$. 
 
\begin{figure}[h]
$$
\epsfig{file=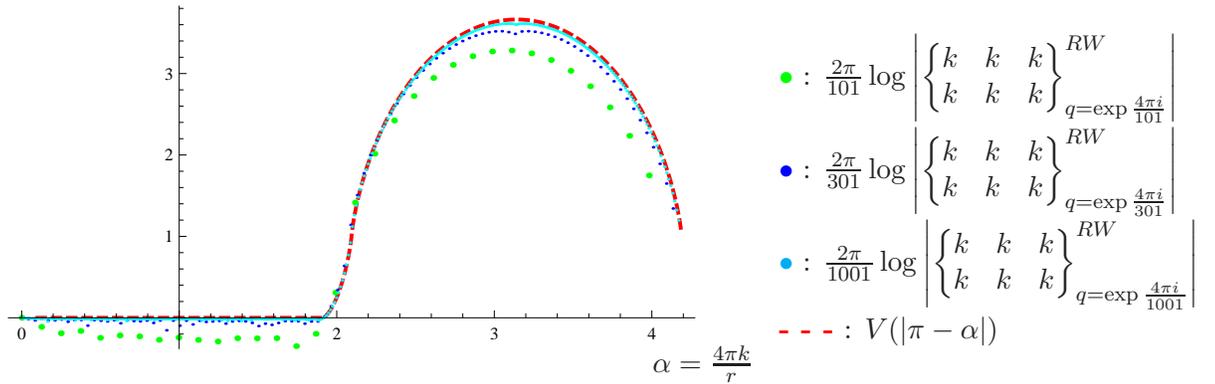, scale=1.0}
\raisebox{-3mm}{\hspace{-6mm}$\alpha = \frac{4 \pi k}{r}$}
\raisebox{20mm}{
\begin{tabular}{l}
\textcolor{green}{$\bullet$} : $\frac{2\pi}{101}\log\left|
\left\{
\begin{matrix}
k&k&k\\k&k&k
\end{matrix}
\right\}_{q=\exp \frac{4 \pi i}{101}}^{RW}
\right|$
\\[12pt]
\textcolor{blue}{$\bullet$} : $\frac{2\pi}{301}\log\left|
\left\{
\begin{matrix}
k&k&k\\k&k&k
\end{matrix}
\right\}_{q=\exp \frac{4 \pi i}{301}}^{RW}
\right|$
\\[12pt]
\textcolor{cyan}{$\bullet$} : $\frac{2\pi}{1001}\log\left|
\left\{
\begin{matrix}
k&k&k\\k&k&k
\end{matrix}
\right\}_{q=\exp \frac{4 \pi i}{1001}}^{RW}
\right|$
\\[12pt]
\textcolor{red}{{\bf - - -}} :  $V(|\pi - \alpha|)$
\end{tabular}}
$$
\caption{Values of the quantum $6j$-symbol and $V(\alpha)$.}
\label{fig:tetr-6j-1}
\end{figure}

Let us notice that the tetrahedron considered in the above conjecture is supposed to have only \textit{mild} vertex truncations, c.f. \cite{KM2012}. Another conjecture considers a generalised hyperbolic tetrahedron $T$ with some vertices \textit{intensely} truncated. We specify this conjecture for the case of a prism truncated tetrahedron, considered in \cite{KM2012, KM2014}. Although, one may state an analogous conjecture for a generalised tetrahedron of any type considered in Section~\ref{section:preliminaries}. 

\begin{conj}
Let $\alpha$, $\beta$, $\gamma$, $\delta$, $\epsilon$, $\phi$ be the dihedral angles of a prism truncated hyperbolic tetrahedron $T$, which may have ideal or ultra-ideal vertices.\footnote{However, only two truncating polar planes determined by its ultra-ideal vertices intersect.} Let the parameter $\phi$ represent the dihedral angle at the edge arising from intense truncation.  
Let $a^{(r)}$, $b^{(r)}$, $\cdots$, $f^{(r)}$ $(r = 3, 5, 7, \cdots)$ be sequences of integers satisfying
$$
4\, \pi\,\lim_{r\to\infty}\frac{a^{(r)}}{r} = \pi - \alpha, \quad
4\, \pi\,\lim_{r\to\infty}\frac{b^{(r)}}{r} = \pi - \beta, \quad
4\, \pi\,\lim_{r\to\infty}\frac{c^{(r)}}{r} = \pi - \gamma,
$$
$$
4\, \pi\,\lim_{r\to\infty}\frac{d^{(r)}}{r} = \pi - \delta, \quad
4\, \pi\,\lim_{r\to\infty}\frac{e^{(r)}}{r} = \pi -  \epsilon, \quad
4\, \pi\,\lim_{r\to\infty}\frac{f^{(r)}}{r} = \pi - \phi,
$$
where $r$ is always an odd integer. Then
\begin{multline}\label{eq:tetr-6j-2}
2\, \pi \, \lim_{r\to\infty}
\sum_{j\,\, odd}\frac{1}{r}\, \log
\left\{\begin{matrix}
a^{(r)} & b^{(r)} & e^{(r)} \\
d^{(r)} & c^{(r)} & \frac{j-1}{2}
\end{matrix}\right\}_{q=\exp(\frac{4 \pi i}{r})}^{RW}
\!\!\!\! \frac{\{(j+1) \, (2\, f^{(r)}+1)\}}{\{j\}}
\\
= {\rm Vol}(T).  
\end{multline}
\end{conj}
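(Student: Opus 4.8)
The plan is to read \eqref{eq:tetr-6j-2} as the ``intensely truncated'' refinement of \eqref{eq:tetr-6j-1} and to prove it by a saddle point analysis of the $j$-summation carried on top of the asymptotics already required for a mildly truncated tetrahedron, matching the resulting potential with the functions $\mathscr{U}$ of \eqref{eq:UDefinition} and $\mathscr{V}$ of \eqref{eq:VDefinition}. First I would expand the Racah--Wigner $6j$-symbol appearing on the left of \eqref{eq:tetr-6j-2} by the classical one-index Racah formula, so that it becomes a one-dimensional sum over an internal index $k$ of a ratio of quantum factorials $\{n\}!$. Combined with the outer sum over $j$ and the weight $\frac{\{(j+1)(2f^{(r)}+1)\}}{\{j\}}$, the left-hand side of \eqref{eq:tetr-6j-2} is then $\frac{2\pi}{r}\log$ of a two-fold sum $\sum_{k,j}$ of products and quotients of quantum factorials and quantum integers, evaluated at $q=\exp(4\pi i/r)$.

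Next I would rescale all indices by $r$: put $a^{(r)}=\tfrac{r}{4\pi}(\pi-\alpha)+o(r)$, and likewise for $b^{(r)},\dots,f^{(r)}$, and introduce continuous variables $u=4\pi k/r$ and $v=4\pi j/r$. Using the uniform asymptotics $\log\{n\}!=\tfrac{r}{4\pi}\,\Lambda\!\big(\tfrac{4\pi n}{r}\big)+O(\log r)$, where $\Lambda$ is (essentially) the Lobachevsky function, a primitive of $-\log|2\sin(\cdot)|$, the double sum is governed by $\exp\!\big(\tfrac{r}{4\pi}\,\Phi(u,v)\big)$ for an explicit potential $\Phi$ built from dilogarithms; the oscillatory factor $q^{(j+1)(2f^{(r)}+1)}=\exp\!\big(\tfrac{4\pi i}{r}(j+1)(2f^{(r)}+1)\big)$ hidden inside $\{(j+1)(2f^{(r)}+1)\}$ --- quadratic in the scaled variables --- is exactly what feeds the dilogarithm terms carrying the angle $\phi$. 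A Laplace/stationary-phase argument then yields $\tfrac{2\pi}{r}\log(\cdots)\to\Re\,\Phi(u^\ast,v^\ast)$ at the dominant saddle $(u^\ast,v^\ast)$.

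The decisive step is to identify $\Phi$ with the geometric data of $T$. Substituting $a_i=e^{i\alpha_i}$ for the five honest dihedral-angle parameters and $z=e^{iu}$, one checks that the $u$-part of $\Phi$ coincides, up to elementary terms and an overall constant, with $\mathscr{U}(a_1,\dots,a_6,z)$, so that $\partial_u\Phi=0$ is precisely the equation of \cite{MY}, $e^{z\,\partial_z\mathscr{U}}=1$, with the two solutions $z_{\pm}$ of \eqref{eq:Zpm}, and the difference of the two saddle contributions reassembles $\mathscr{V}$ of \eqref{eq:VDefinition}. The remaining saddle equation $\partial_v\Phi=0$, after setting $a_4=e^{-\ell_4}$ with $\ell_4$ the length of the common perpendicular internal to the prism-truncated tetrahedron, is a Schl\"{a}fli-type condition pinning down $a_4$ by requiring that the dihedral angle $2a_4\,\tfrac{\partial\mathscr{V}}{\partial a_4}$ along the intense-truncation edge equal $\pi-\phi$; this is the single-tetrahedron analogue of the glueing equations \eqref{eq:glueing-1}--\eqref{eq:glueing-2}, so the $j$-sum plays here the role of a capping move as in Theorem~\ref{thm:main}. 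Eliminating $v$ by this condition (a Legendre transform) turns $\Re\,\Phi(u^\ast,v^\ast)$ into $\Re\big(-\mathscr{V}+a_4\,\tfrac{\partial\mathscr{V}}{\partial a_4}\log a_4\big)$, which by \eqref{eq:vol1} is $\mathrm{Vol}\,T$; the extra derivative term, whose geometric meaning is explained in \cite{ChoKim}, is thus exactly the contribution of the $j$-summation.

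The hard part will be justifying the saddle point asymptotics rigorously. Since the indices $k,j$ run over integers, one must first realise the double sum as the residue expansion of a contour integral of a ratio of Faddeev-type quantum dilogarithms, and then deform the contour so that it passes through the dominant critical point; the fast quadratic oscillation coming from $q^{(j+1)(2f^{(r)}+1)}$ makes the pole- and endpoint-bookkeeping genuinely more delicate than in the mildly truncated case \eqref{eq:tetr-6j-1}. One must then show that among the finitely many saddles --- the pair $z_{\pm}$ together with the (possibly non-unique, for ``butterfly'' configurations) branches of $v^\ast$ solving \eqref{eq:glueing-1}--\eqref{eq:glueing-2} --- the branch carrying the geometric solution has strictly largest $\Re\,\Phi$, so that it alone survives the limit. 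This is the usual Stokes-phenomenon obstruction that keeps the Volume Conjecture open in general, but for a single generalised tetrahedron it should be tractable along the lines of \cite{CM, CGV}, with the monotonicity of the relevant dihedral angles in $\ell_4$ (cf. Fig.~\ref{fig:angles-monotone} and the discussion around Theorem~\ref{thm:main}) fixing the correct branch.
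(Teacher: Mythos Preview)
The statement you are attempting to prove is labelled a \emph{Conjecture} in the paper, and the paper offers no proof of it whatsoever. What the authors supply is purely numerical evidence: they define the auxiliary sum $U(k,l)$, tabulate $\tfrac{2\pi}{r}\log|U((r-3)/3,l)|$ for a few values of $r$, and superimpose these against the exact volume $V(\beta)=\mathrm{Vol}\,T_{\pi/3,\beta}$ in a figure. There is no argument, formal or heuristic, beyond the observation that the plotted points appear to approach the volume curve. Consequently there is nothing in the paper against which your attempt can be compared.

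As for your proposal on its own terms: it is not a proof but a strategy, and you say as much. The outline --- Racah expansion, rescaling to a dilogarithmic potential, saddle-point identification with $\mathscr{U}$ and $\mathscr{V}$, Legendre transform in the $j$-variable to recover the correction term $a_4\,\partial_{a_4}\mathscr{V}\,\log a_4$ --- is the natural route and is consistent with how \cite{CM} handles the mildly truncated case. But the two acknowledged gaps are genuine and, as far as is publicly known, unresolved: (i) the contour deformation through the quadratic-phase region introduced by the weight $\{(j+1)(2f^{(r)}+1)\}/\{j\}$ has not been carried out rigorously for this class of sums, and (ii) the dominance of the geometric saddle over the non-geometric branches is exactly the obstruction that keeps statements of this type conjectural. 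Your appeal to monotonicity (Fig.~\ref{fig:angles-monotone}) is suggestive for uniqueness of the real critical point of $\Phi$, but it does not by itself control the growth along the deformed contour away from that point. In short, you have correctly identified the expected mechanism, but what you have written is a programme, not a proof --- which is precisely why the authors stated the result as a conjecture.
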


Let $T_{\alpha, \beta}$ be the doubly truncated tetrahedron with dihedral angle $\alpha$ at its usual five edges and $\beta$ at the edge arising from intense truncation. Let 
\begin{equation*}
U(k, l) = 
\sum_{j}\left\{
\begin{matrix}
k&k&k\\k&k&j
\end{matrix}
\right\}_{q=exp\left(\frac{4\pi i}{r}\right)}^{RW}
\, \frac{\{(2\,j+1)(2\,l+1)\}}{\{2j+1\}}.  
\end{equation*}
In Fig.~\ref{fig:tetr-6j-2} the values of $U((r-3)/3, l)$ and $V(\beta) = \mathrm{Vol}\, T_{\pi/3, \beta}$ are shown.  
\begin{figure}[htb]
$$
\epsfig{file=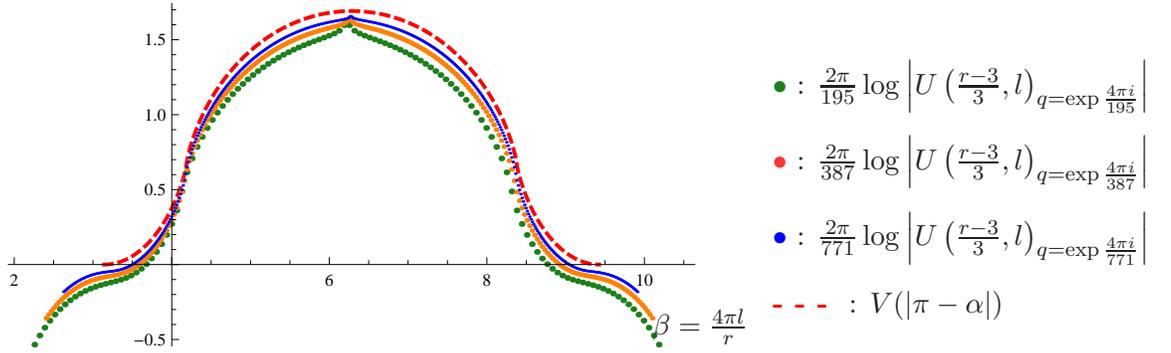, scale=1.0}
\raisebox{2mm}{\hspace{-6mm}$\beta = \frac{4 \pi l}{r}$}
\raisebox{20mm}{
\begin{tabular}{l}
\textcolor[rgb]{0.1,0.5,0.1}{$\bullet$} : $\frac{2\pi}{195}\log\left|
U\left(\frac{r-3}{3}, l\right)_{q=\exp \frac{4 \pi i}{195}}
\right|$
\\[12pt]
\textcolor[rgb]{1,0.2,0.2}{$\bullet$} : $\frac{2\pi}{387}\log\left|
U\left(\frac{r-3}{3}, l\right)_{q=\exp \frac{4 \pi i}{387}}
\right|$
\\[12pt]
\textcolor{blue}{$\bullet$} : $\frac{2\pi}{771}\log\left|
U\left(\frac{r-3}{3}, l\right)_{q=\exp \frac{4 \pi i}{771}}
\right|$
\\[12pt]
\textcolor{red}{{\bf - - - }} :  $V(|\pi - \alpha|)$
\end{tabular}}
$$
\caption{Values of $U(\frac{r-3}{3}, l)$ and $V(\beta)$.} 
\label{fig:tetr-6j-2}
\end{figure}

Since the volume of $P$ is a sum of volumes of the generalised tetrahedra $T_i$ composing it, we hope that there might be a correspondence between the asymptotic behaviour of the sums \eqref{eq:tetr-6j-1}-\eqref{eq:tetr-6j-2} associated with each tetrahedron $T_i$, and the asymptotic behaviour of the Kirillov-Reshetikhin invariant $\langle \Gamma, c^{(r)} \rangle$ of the whole polyhedron $P$.

\bigskip

\begin{flushleft}
\textit{
Alexander Kolpakov\\
Department of Mathematics\\
University of Toronto\\
40 St. George Street\\
Toronto ON\\
M5S 2E4 Canada\\
kolpakov.alexander(at)gmail.com}
\end{flushleft}

\medskip

\begin{flushleft}
\textit{
Jun Murakami\\
Department of Mathematics\\
Faculty of Science and Engineering\\
Waseda University\\
3-4-1 Okubo Shinjuku-ku\\ 
169-8555 Tokyo, Japan\\
murakami(at)waseda.jp}
\end{flushleft}


\begin{thebibliography}{}


\bibitem{ChoKim}\textsc{Y.~Cho, H.~Kim} {``Geometric and analytic interpretation of orthoscheme and Lambert cube in extended hyperbolic space,''} J. Korean Math. Soc. \textbf{50}~(6), 1223-1256~(2013).








\bibitem{CM} \textsc{F.~Costantino, J.~Murakami} {``On $SL(2, \mathbb{C})$ quantum $6j$-symbol and its relation to the hyperbolic volume''}, arXiv:1005.4277

\bibitem{CGV} \textsc{F.~Costantino, F.~Gu\'{e}ritaud, R. van der Veen} {``On the volume conjecture for polyhedra''}, arXiv:1403.2347

\bibitem{F} \textsc{P. Filliman} {``The volume of duals and sections of polytopes,''} Mathematika \textbf{39}~(1), 67-80 (1992).

\bibitem{Orb}\textsc{D.~Heard} {``Orb''}, An interactive software for finding hyperbolic structures on $3$-dimensional orbifolds and manifolds, available from http://www.ms.unimelb.edu.au/$\sim$snap/orb.html

\bibitem{Jacquemet}\textsc{M.~Jacquemet} {``The inradius of a hyperbolic truncated $n$-simplex''}, Discrete Comput. Geom. \textbf{51}, 997-1016 (2014).

\bibitem{Kashaev}\textsc{R.M.~Kashaev} {``The hyperbolic volume of knots from the quantum dilogarithm,''} Lett. Math. Phys. \textbf{39}~(3), 269-275 (1997); arXiv:q-alg/9601025.

\bibitem{KR}\textsc{A.N. Kirillov, N. Yu. Reshetikhin} {``Representations of the algebra $U_q(sl(2))$, $q$-orthogonal polynomials and invariants of links,''} in Infinite-dimensional Lie algebras and groups (Luminy-Marseille, 1988), pp. 285--339, Adv. Ser. Math. Phys., \textbf{7}, World Sci. Publ., Teaneck, NJ, 1989.


\bibitem{KM2012}\textsc{A.~Kolpakov, J.~Murakami} {``Volume of a doubly truncated hyperbolic tetrahedron,''} Aequationes Math., \textbf{85}~(3), 449-463 (2013); arXiv:1203.1061.

\bibitem{KM-err}\textsc{A.~Kolpakov, J.~Murakami} {``Erratum to: Volume of a doubly truncated hyperbolic tetrahedron,''} Aequationes Math., \textbf{88}~(1-2), 199-200 (2014).

\bibitem{KM2014}\textsc{A.~Kolpakov, J.~Murakami} {``The dual Jacobian of a generalised tetrahedron, and volumes of prisms''}, Tokyo J. Math., to appear; arXiv:1409.3355. 

\bibitem{W}\textsc{Wolfram Research}, ``Mathematica 9'', a computational software; http://www.wolfram.com/mathematica/







\bibitem{L} \textsc{J. Lawrence} {``Polytope volume computation,''} Mathematics of Computation \textbf{57}~(195), 259-271 (1991).

\bibitem{MY}\textsc{J.~Murakami, M.~Yano} {``On the volume of hyperbolic and spherical tetrahedron,''} Comm. Annal. Geom. \textbf{13}~(2), 379-400~(2005).

\end{thebibliography}
\end{document}